\newtheorem{theorem}{Theorem}
\newtheorem{lemma}{Lemma}
\begin{document}

\title[]{Many critical points for \\
discrete Riesz energy on \Large $\mathbb{T}^2$}

\author[]{Fran\c{c}ois Cl\'ement \and Stefan Steinerberger}

\address{Department of Mathematics, University of Washington, Seattle}
 \email{fclement@uw.edu }

 \address{Department of Mathematics and Department of Applied Mathematics, University of Washington, Seattle}
 \email{steinerb@uw.edu}

\begin{abstract} 
It is widely believed that the energy functional $E_p:(\mathbb{S}^2)^n \rightarrow \mathbb{R}$  
$$ E_p = \sum_{i,j=1 \atop i \neq j}^{n} \frac{1}{\|x_i-x_j\|^p}$$
has a number of critical points, $\nabla E(x) = 0$, that grows exponentially in $n$. Despite having been extensively tested and being physically well motivated, no rigorous result in this direction exists. We prove a version of this result on the two-dimensional flat torus $\mathbb{T}^2$ and show that there are infinitely many $n \in \mathbb{N}$ such that the number of critical points of $E_p: (\mathbb{T}^2)^n \rightarrow \mathbb{R}$ is at least $\exp(c \sqrt{n})$ provided $p \geq 5 \log{n}$. We also investigate the special cases $n=3,4,5$ which turn out to be surprisingly interesting.
\end{abstract}

\maketitle

\section{Introduction and Results}
\subsection{The problem}

We consider the classical Riesz energy minimization problem for a finite number of points. It is frequently studied on the sphere: given $p>0$ find $x_1, \dots, x_n \in \mathbb{S}^2$ so as to minimize 
$$ E(x_1, x_2, \dots, x_n) = \sum_{i,j=1 \atop i \neq j}^{n} \frac{1}{\|x_i - x_j\|^p}.$$
This problem is classical, known as the Thomson problem \cite{thomson} when $p=1$, and has been studied for a very long time, we refer to \cite{boro,brauchart, hardin, saff}. We were motivated by a very simple question.
\begin{quote}
    How many critical points does the energy $E(x_1, \dots, x_n)$ have?
\end{quote}
There are two symmetries: relabeling the points (giving rise to $n!$ critical points from a single critical point) and isometries on the sphere. We interpret the question after these two symmetries have been factored out (sometimes known as `geometrically distinct configurations').
Erber and Hockney \cite{erber, erber2} undertook numerical simulations (for $p=1$) suggesting that the number of critical points grows exponentially in $n$: they estimate the number to be $\sim 0.382 \exp (0.0497 n)$. Exponential growth has also been suggested by other authors \cite{am2, bendito, calef, mehta}. Amore-Figueroa-Diaz-Lopez-Vincent \cite{amore} suggest $\sim 0.00017 \exp(0.12 n)$.
Exponential growth is sometimes taken for granted in the physics literature: it \textit{seems difficult to avoid the same conclusion that the total number of distinguishable packings rises
exponentially with increasing N at fixed density} \cite{still1}, this \textit{rests on the expectation that particle packings can be rearranged essentially independently in the two halves of a large system} \cite{still2}. A proof based \textit{on physical grounds} was given by Stillinger \cite{still3}. We do not even know of an argument showing that there are at least, say, $n^{10}$ critical points.

\subsection{Critical Points on the Torus}
The main purpose of this paper is to study the corresponding problem on the flat torus $\mathbb{T}^2 \cong [0,1]^2$ and to establish, among other things, some nontrivial lower bounds on the number of critical points.

\begin{theorem}
    There exist constants $c_1, c_2 > 0$ and an infinite set of integers $A \subset \mathbb{N}$ such that if $n \in A$ and $p > 5 \log{n}$, then the function
    $E: (\mathbb{T}^2)^n \rightarrow \mathbb{R}$ 
    $$ E(x_1, x_2, \dots, x_n) = \sum_{i,j=1 \atop i \neq j}^{n} \frac{1}{\|x_i - x_j\|^p}$$
    has at least $c_1 \exp(c_2 \sqrt{n})$ critical points.
\end{theorem}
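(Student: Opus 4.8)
\emph{The plan is to build the critical points by hand as ``stacked rows''.} Fix $n=R\cdot C$ with $C$ odd, and consider configurations made of $R$ horizontal rows, the $j$-th row being the $C$ equally spaced points $\{(\phi_j+i/C,\,h_j):0\le i<C\}$ for some offset $\phi_j\in\{0,\tfrac1{2C}\}$ and some height $h_j\in\mathbb T^1$. The key observation is a decoupling: because each row is invariant under the horizontal translation $x\mapsto x+1/C$, summing the interaction of a fixed point with a whole other row over a full period and differentiating in the horizontal variable produces a sum of values of an odd function over a coset of $\tfrac1{2C}\mathbb Z$ (here $u$ is odd about $0$ and about $\tfrac12$), hence it vanishes whenever two rows' offsets differ by an element of $\tfrac1{2C}\mathbb Z$ — which is automatic since $\phi_j\in\{0,\tfrac1{2C}\}$. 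Consequently the horizontal part of $\nabla E$ is identically zero on all such configurations, and the vertical part of $\nabla E$ restricted to a row is the same at all $C$ of its points. Thus on this family $\nabla E=0$ is equivalent to the single system $\partial_{h_j}E=0$, $j=1,\dots,R$, which is the equilibrium equation for $R$ labelled particles on the circle $\mathbb T^1$ interacting through one of two even, strictly repulsive, superexponentially short‑range pair potentials $V_0$ (equal offsets) or $V_1$ (opposite offsets). So it suffices, for suitably many offset patterns, to produce an equilibrium of this reduced one‑dimensional problem and to check the resulting configurations are geometrically distinct.

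For a given pattern $\vec\phi\in\{0,\tfrac1{2C}\}^R$ I would look for an equilibrium that keeps the rows in the prescribed cyclic order; this is what makes distinct patterns give distinct configurations. One restricts the reduced energy $\sum_{j<j'}V_{\tau(j,j')}(h_j-h_{j'})$ to the order region $h_1<h_2<\dots<h_R<h_1+1$; it tends to $+\infty$ whenever two equal-offset rows collide, so a minimizer exists on the closure, and the hypothesis $p\ge 5\log n$ — which forces $V_0,V_1$ to live on a scale far below the typical spacing $1/R$ — is used to show the minimizer lies in the interior (no two rows coincide or swap order), hence is a genuine critical point of $E$ on $(\mathbb T^2)^n$. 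Two patterns yield isometric configurations only if they agree after a cyclic rotation (a vertical translation of $\mathbb T^2$), a reversal (a vertical reflection), or a global complement (a horizontal shift by $\tfrac1{2C}$); the number of binary necklaces of length $R$ modulo these operations is $\Theta(2^R/R)$. Choosing $R\asymp C\asymp\sqrt n$ with $C$ odd — which is possible along an infinite set $A$ of $n$ — turns this into at least $c_1\exp(c_2\sqrt n)$ geometrically distinct critical points.

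Two points require care, and the second is the real obstacle. First, the decoupling forces a type-$1$ row to sit at offset exactly $\tfrac1{2C}$, so some point of a type-$0$ row and some point of a type-$1$ row are at horizontal distance exactly $\tfrac12$, i.e.\ on the cut locus of $\mathbb T^1$, where $E$ is not differentiable; this is benign because $E$ is still smooth along each row family, and in the offending transverse direction the one-sided derivatives of $E$ are $\mp F^\ast$ with $F^\ast>0$, so $0$ lies between them and the configuration is an equilibrium in every reasonable sense (and one may also slightly perturb the construction to avoid this if a literal $\nabla E=0$ is wanted). Second — and this is where I expect the main work to be — the \emph{global} minimizer of the reduced energy over the full row family ``sorts'' the two row types into two separated equally spaced sublattices and therefore remembers only how many rows are of each type, collapsing the $2^R$ patterns to $O(R)$; the exponential count survives only because one insists on an equilibrium \emph{compatible with the given cyclic order}, and proving such an equilibrium exists, is interior, and is nondegenerate is precisely where the short-range regime $p\ge 5\log n$ and the restriction to the special values of $n$ are genuinely needed.
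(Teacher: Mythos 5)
Your plan is viable and reaches the theorem by a genuinely different route in its second half, while sharing the paper's combinatorial skeleton: both encode $\sim 2^{c\sqrt n}$ binary patterns in a doubly periodic row/line structure and both use $p\gtrsim \log n$ to ensure the pattern cannot be destroyed. The paper keeps $m$ fully populated vertical lines, encodes the bit string in which half-rows are deleted, runs the gradient flow from these (non-critical) configurations, uses reflection symmetry to confine each point to its line and $2/m$-periodicity to reduce to two lines, and recovers the bit string from the limit because a crossing would put $m$ points on one line, forcing $E\geq m\cdot m^{p}$ and contradicting the conserved upper bound $E\leq 6n\delta^{-p}+n^{2}(1.69\delta)^{-p}$ once $p\gtrsim 3\log n$. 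You instead encode the bits in the offsets of $R$ complete rows, use the exact $1/C$-translation symmetry to kill the horizontal gradient and reduce to a static $R$-particle problem on $\mathbb{T}^{1}$, and take one interior constrained minimizer per cyclic order class. Your route avoids all discussion of the flow (convergence, invariance of the structure) and is purely variational; the paper's route avoids your boundary analysis because the crossing bound is the only estimate it needs. The step you leave open --- interiority of the constrained minimizer --- is precisely the analogue of the paper's crossing estimate and closes the same way: a coincidence of two opposite-offset rows creates at least $C$ distances of size $1/(2C)$, hence energy $\gtrsim C(2C)^{p}$, while the equally spaced test configuration in the prescribed order has energy $\lesssim n^{2}\max(R,C)^{p}$, a contradiction for $p\geq 5\log n$ with $R\asymp C\asymp\sqrt n$; nondegeneracy is not needed, since distinctness is read off the offset/ordering pattern (up to a bounded extra loss for torus isometries such as the $90^{\circ}$ rotation you omit). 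Finally, your cut-locus concern is real but not specific to your construction: the paper's own family ($m$ even lines at spacing $1/m$) also contains pairs at horizontal distance exactly $1/2$, so its ``critical points'' face the same non-differentiability; your one-sided-derivative treatment is, if anything, more explicit than the paper's, though the suggested perturbation fix would break the symmetry you rely on and should not be leaned on.
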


\textbf{Remarks.} Several remarks are in order.
\begin{enumerate}
\item We use $\| x_i - x_j \|$ to denote the geodesic distance $d(x_i, x_j)$ on $\mathbb{T}^2 \cong [0,1]^2$. 
\item Just as on $\mathbb{S}^2$, the number of critical points refers to the number after all the symmetries have been factored out. 
    \item The subset of integers $A \subset \mathbb{N}$ is $A = \left\{60 \ell^2: \ell \in \mathbb{N} \right\}.$ The proof could be modified to cover larger sets of integers, we chose this particular subset for ease of exposition. The proof also gives explicit values for the constants $c_1$ and $c_2$, these have not been optimized.
    \item The condition $p > 5 \log{n}$ is required for the proof. There is no reason to assume that it is necessary for the statement to be true. Our construction might even be able to exhibit this number of such critical points for smaller values of $p$, however, some new ideas are required to make this rigorous; we comment on this after the proof.
\end{enumerate}

When $p$ is `large', most of the energy comes from the nearest-neighbor interactions: in that case the global geometry of $\mathbb{T}^2$ should \textit{not} come into play, everything should behave more or less as it does on $\mathbb{S}^2$ (or, for that matter, on any other compact two-dimensional Riemannian manifold without boundary). In particular, one would be inclined to expect an exponential number of critical points.

\begin{center}
\begin{figure}[h!]
\begin{tikzpicture}
\node at (0,0) { \includegraphics[width=0.3\textwidth]{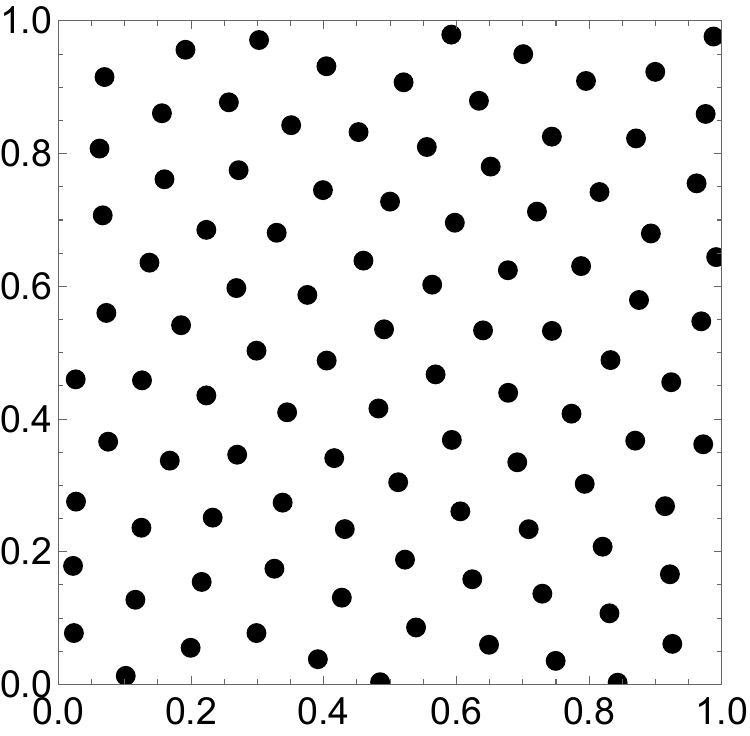}};
\node at (4,0) { \includegraphics[width=0.3\textwidth]{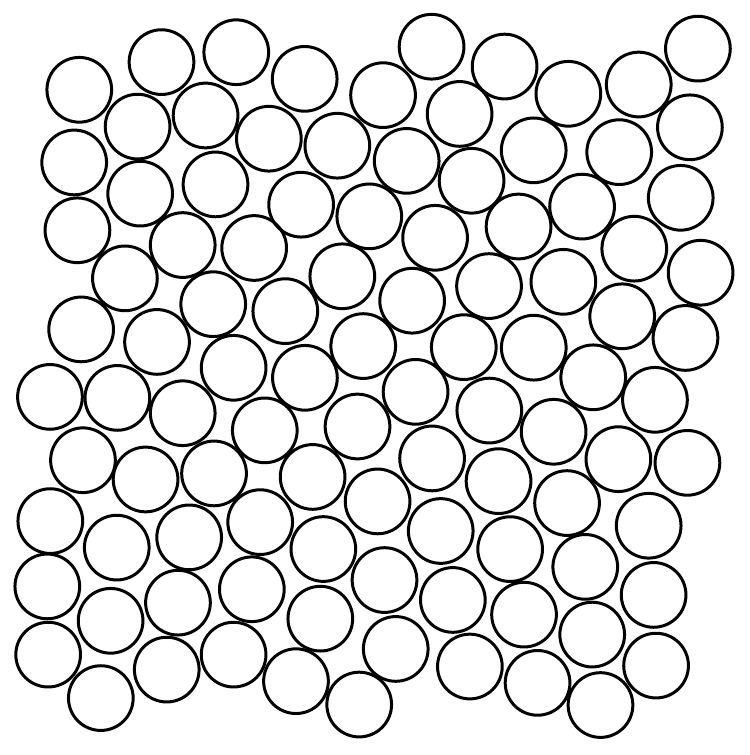}};
\node at (8,0) { \includegraphics[width=0.3\textwidth]{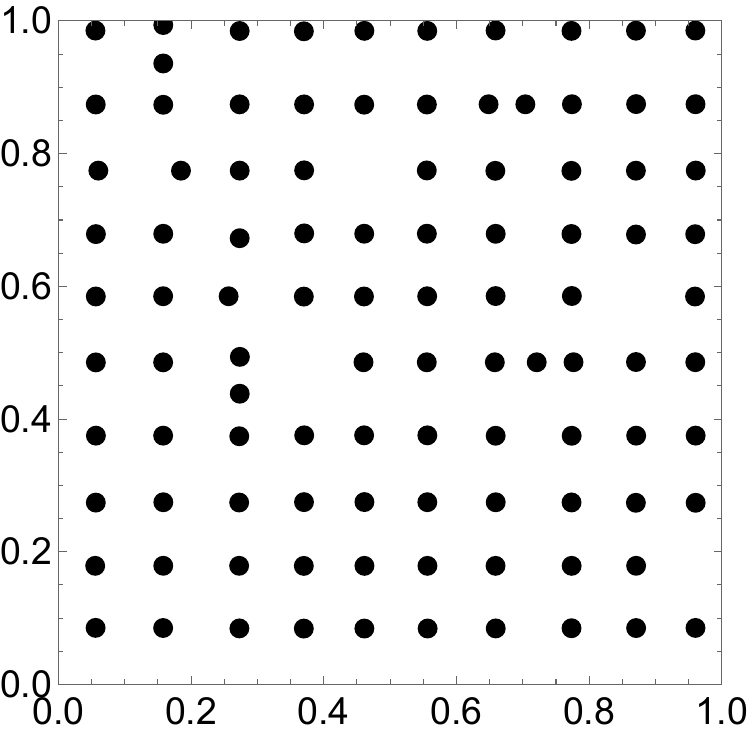}};
\node at (2.2,-2.4) {$\sum_{i \neq j} \|x_i - x_j\|^{-4}$};
\node at (8,-2.4) {$\sum_{i \neq j} \log\left(1/\|x_i - x_j\|\right)$};
    \end{tikzpicture}
    \caption{Critical points found via gradient descent starting with 100 random points. Left: $p=4$. Middle: the case $p=4$ with circles drawn around each point.  Right: logarithmic energy $p=0$.}
    \label{fig:surprise}
    \end{figure}
\end{center}

\subsection{A surprise for small $p$.}
Somewhat to our surprise, see Fig. \ref{fig:surprise} (right) and Fig. \ref{fig:intricate}, the logarithmic energy (also sometimes known as $0-$Riesz energy because it can be recovered from a suitable renormalization of the energy $E_p$ as $p \rightarrow 0^+$) behaves differently; it is defined as 
$$ E_0(x_1, x_2, \dots, x_n) := \sum_{i,j=1 \atop i \neq j}^{n} \log\left( \frac{1}{\|x_i - x_j\|} \right).$$
Even when starting with a set of completely random points, the gradient flow appears to transport the points to a critical point of the energy functional where the points appear to be arranged in a `grid-like' fashion (certain lines contain many points). This suggests a much more intricate global structure. We note that if there are $\leq c\sqrt{n}$ lines containing somewhere between $0$ and $c \sqrt{n}$ points, then the number of such configurations is $\leq (c \sqrt{n})^{c \sqrt{n}} \sim \exp(c (\log{n}) \sqrt{n})$ which would be quite a bit smaller than what is expected on $\mathbb{S}^2$.
We believe this to be a fascinating phenomenon. There are at least two very natural questions.
\begin{enumerate}
    \item  Are `most' critical points of the logarithmic energy on $\mathbb{T}^2$ `grid-like'? 
    \item Does the gradient flow initialized with $n$ random points lead to a critical point that is arranged in a `grid-like' fashion with high likelihood?
\end{enumerate}
`Grid-like' and `most' are intentionally kept vague. The critical points found by gradient flow turn out to have a most intricate structure, see Fig. \ref{fig:intricate}.
We also remark that it is conceivable that a great many `non-grid-like' critical points may exist, perhaps they are very difficult to find: maybe they have a small basin of attraction in the energy landscape and gradient descent is unlikely to reveal them (cf. the `unseen species problem' \cite{amore}).

\begin{center}
    \begin{figure}[h!]
      \begin{tikzpicture}
      \node at (0,0) {\includegraphics[width=0.3\textwidth]{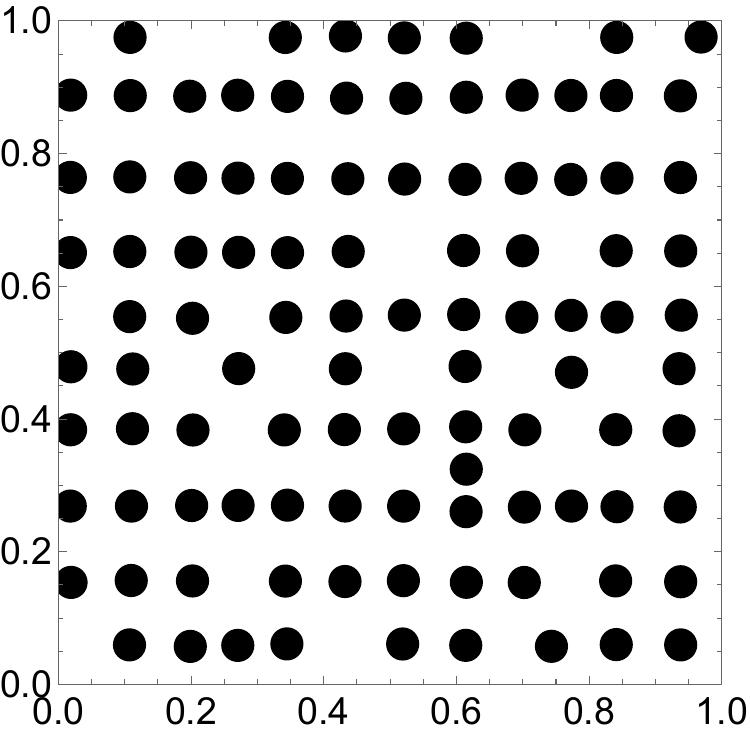}};    
       \node at (4,0) {\includegraphics[width=0.3\textwidth]{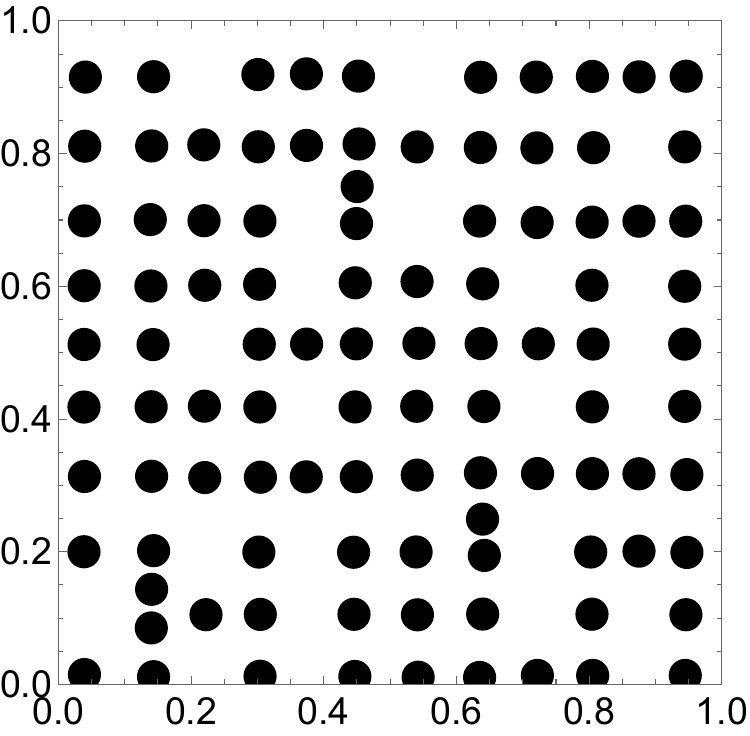}}; 
       \node at (8,0) {\includegraphics[width=0.3\textwidth]{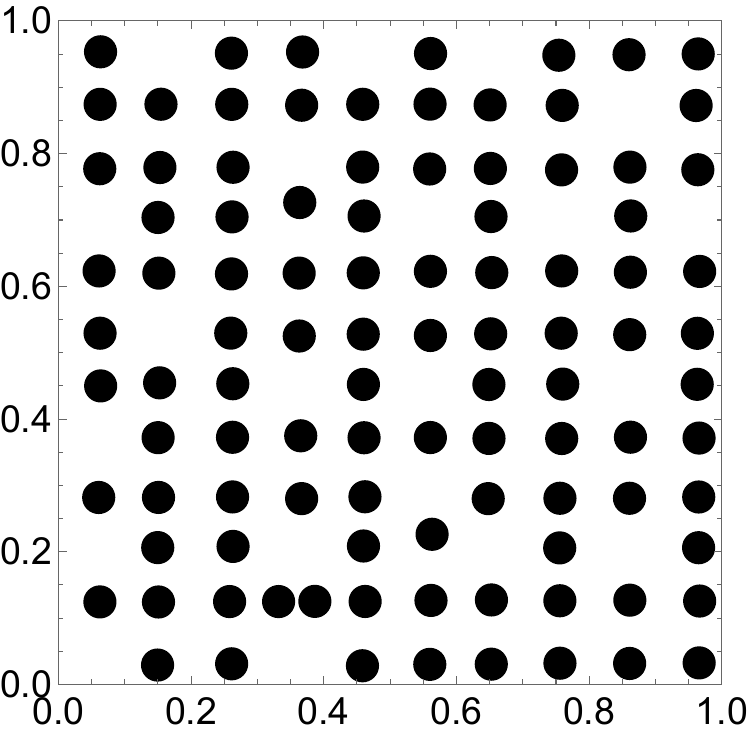}}; 
      \end{tikzpicture}
        \caption{Critical points for the logarithmic energy obtained via gradient descent from three sets of 100 random initial points.}
        \label{fig:intricate}
    \end{figure}
\end{center}

\vspace{-20pt}

\subsection{Related results}
The Riesz energy of points on $\mathbb{T}^2$ has been considered before \cite{bence, hardin2, marzo}. However, we are not aware of any directly related results, in particular on the number of critical points (either on $\mathbb{S}^2$ or on $\mathbb{T}^2$). The problem of minimizing the energy $\mathbb{S}^2$, especially when $p=1$, is well-known and often used as a benchmark problem for optimization algorithms \cite{alt, morris, xiang}. We were at least partially motivated by recent work of Nagel \cite{nagel} and Bilyk-Nagel-Ruohoniemi \cite{bilyk} concerning the Fibonacci set on $\mathbb{T}^2$ (the 5-element Fibonacci set also arises as the minimizer for our problem when $n=5$, see \S \ref{sec:fibo}). We also note the connection to the crystallization conjecture \cite{blanc} and the conjecture of Cohn--Kumar \cite{cohnk}.

\section{Small number of points and special configurations}
The problem on the flat torus $\mathbb{T}^2$ is quite fascinating. Even $n=3,4,5$ points on $\mathbb{T}^2$ already give rise to interesting structures that are very different from $\mathbb{S}^2$. 

\subsection{The case $n=3$.}
There are at least three configurations of interest. These are shown in Fig. \ref{fig:n3} and are (up to all the symmetries) 
\begin{align*}
  T_1 &= \left\{ (0,0), (\tfrac12, 0), (\tfrac12, \tfrac12) \right\} \\
  T_2 &= \left\{ (0,\tfrac12), (\tfrac12, \tfrac14), (\tfrac12, \tfrac34) \right\} \\
 T_3 &= \left\{ (0,0), (\tfrac12, \tfrac{2- \sqrt{3}}{2}), (\tfrac{2- \sqrt{3}}{2}, \tfrac12) \right\}. 
\end{align*}

\begin{center}
    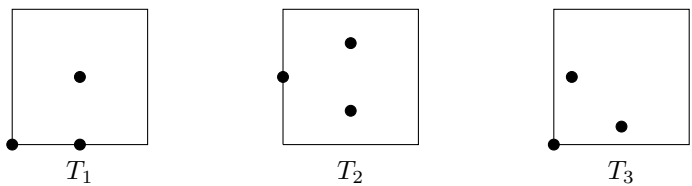
\begin{figure}[h!]
     \begin{tikzpicture}[scale=0.9]
         \draw (0,0) -- (2,0) -- (2,2) -- (0,2) -- (0,0);
       \draw (4,0) -- (6,0) -- (6,2) -- (4,2) -- (4,0);
         \draw (8,0) -- (10,0) -- (10,2) -- (8,2) -- (8,0);
    \filldraw (0,0) circle (0.08cm);
   \filldraw (1,0) circle (0.08cm);
    \filldraw (1,1) circle (0.08cm);
    \filldraw (4,1) circle (0.08cm);
        \filldraw (5,0.5) circle (0.08cm);
        \filldraw (5,1.5) circle (0.08cm);
      \filldraw (8,0) circle (0.08cm);
      \filldraw (9,0.266) circle (0.08cm);
    \filldraw (8.266, 1) circle (0.08cm);
    \node at (1, -0.4) {$T_1$};
    \node at (5, -0.4) {$T_2$};
    \node at (9, -0.4) {$T_3$};
     \end{tikzpicture}
        \caption{Three interesting sets of three points.}
        \label{fig:n3}
    \end{figure}
\end{center}
\vspace{-5pt}

An explicit computation shows that $T_1$ has the smallest energy among the three for $0 \leq p \leq 4.505$. $T_2$ minimizes the energy among the three for $4.505 \leq p \leq 29.65$. Finally, once $p \geq 29.65$, the set $T_3$ has the smallest energy among the three. It seems reasonable to conjecture that $T_1$ and $T_2$ are optimal for `small' and `intermediate' value of $p$, respectively. 

\begin{center}
\begin{figure}[h!]
    \begin{tikzpicture}
    \draw [thick, ->] (0,0) -- (9,0);
    \draw [thick] (0,-0.1) -- (0,0.1);
    \node at (0, -0.3) {$p=0$};
      \draw [thick] (2,-0.1) -- (2,0.1); 
          \node at (2, -0.3) {$p=4.505$};
     \draw [thick] (6,-0.1) -- (6,0.1);  
         \node at (6, -0.3) {$p=29.653$};
         \node at (1, 0.2) {$T_1$};
       \node at (3.5, 0.2) {$T_2$};
        \node at (7.8, 0.2) {close to $T_3$};
\end{tikzpicture}
\vspace{-5pt}
\caption{Conjectured minimal energy configuration.}
\end{figure}
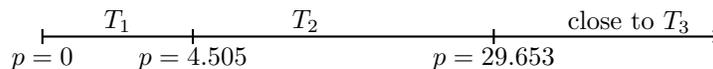  
\end{center}
\vspace{-5pt}
Even though $T_3$ is the set of point maximizing the minimal distance between any pair of points and thus the formal limit as $p \rightarrow$, $T_3$ is \textit{not} optimal for any large but finite value of $p$: for large values of $p$ any energy minimizing configuration has to be close to $T_3$ but is distinct from $T_3$.
 
\begin{theorem}
 As $p \rightarrow \infty$, every global minimizer of
 $$\min_{x_1, x_2, x_3 \in \mathbb{T}^2} \quad \frac{1}{\|x_1 - x_2\|^p} +  \frac{1}{\|x_1 - x_3\|^p} +  \frac{1}{\|x_2 - x_3\|^p} $$
 has to be close to $T_3$ but is distinct from it. Each such configuration can only be optimal for a finite interval of $p$: as $p$ varies, there are infinitely many distinct global minimizers (even after factoring out the symmetries).
\end{theorem}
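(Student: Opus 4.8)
The plan is to pair a soft ``packing limit'' argument with one explicit perturbation computation. Write $d_{\min}(X)=\min_{i\neq j}\|x_i-x_j\|$ and set $r:=\sqrt{2-\sqrt3}=\tfrac{\sqrt6-\sqrt2}{2}$, the common value of all three pairwise distances in $T_3$; by assumption $r=\max_X d_{\min}(X)$. Since $E_p$ is continuous and blows up as any two points collide, a global minimizer $X^{(p)}$ exists for every finite $p>0$. The two ordered-pair terms coming from the closest pair give $2\,d_{\min}(X)^{-p}\le E_p(X)$, and $T_3$ is always a competitor with $E_p(T_3)=6\,r^{-p}$, so
$$ 2\,d_{\min}\!\left(X^{(p)}\right)^{-p}\;\le\;E_p\!\left(X^{(p)}\right)\;\le\;E_p(T_3)\;=\;6\,r^{-p}, $$
whence $d_{\min}(X^{(p)})\ge r\cdot 3^{-1/p}\to r$; since also $d_{\min}\le r$ we get $d_{\min}(X^{(p)})\to r$. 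Taking for granted (as the discussion above does) that $T_3$ is, up to relabeling and isometries, the \emph{unique} maximizer of $d_{\min}$, compactness of the reduced configuration space then forces $X^{(p)}\to T_3$.

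Next I would show that $T_3$ is never itself a global minimizer (indeed never a local minimizer) for finite $p$, by exhibiting a one-parameter deformation through $T_3$ along which $E_p$ has nonzero derivative. Take $P_1=(0,0)$, $P_2=(\tfrac12,t)$, $P_3=(t,\tfrac12)$ with $t$ near $c:=\tfrac{2-\sqrt3}{2}$; at $t=c$ this is $T_3$. For $t$ in a neighbourhood of $c$ a direct check of lattice translates shows the three geodesic distances are $\|P_1-P_2\|=\|P_1-P_3\|=\sqrt{\tfrac14+t^2}=:a(t)$ and $\|P_2-P_3\|=\sqrt2\,(\tfrac12-t)=:b(t)$, with $a(c)=b(c)=r$, so
$$ E_p(t)=4\,a(t)^{-p}+2\,b(t)^{-p},\qquad E_p'(c)=p\,r^{-p-2}\bigl(-4c+2\sqrt2\,r\bigr)=p\,r^{-p-2}\,(4\sqrt3-6)\;>\;0 . $$
Hence configurations with $t$ slightly below $c$ have strictly smaller energy than $T_3$, and no global minimizer can equal $T_3$. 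Together with the first paragraph this establishes the opening assertion of the theorem.

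For the last assertion, fix a configuration $X$ and let $I_X=\{p>0:\ X\text{ is a global minimizer of }E_p\}$. If $I_X$ were unbounded, then along a sequence $p_k\to\infty$ in $I_X$ the minimizers (each equal to $X$) would converge to $T_3$, forcing $X=T_3$, which is impossible by the previous step. Hence every $I_X$ is bounded. Since $E_p$ has a global minimizer for every $p$, the bounded sets $I_X$ cover $(0,\infty)$, so infinitely many distinct configurations must arise as global minimizers. (Sharpening ``bounded'' to ``interval'' is a separate, milder point, not needed for the count.)

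The one ingredient I would want to nail down carefully is the uniqueness used in the first paragraph: that $T_3$ is, up to the symmetries, the only maximizer of the minimal pairwise distance. The natural route is the standard packing argument: at a maximizer every pair must be in contact (otherwise some point could be pushed away from its neighbours to increase $d_{\min}$), after which one classifies the torus-equilateral triples of side $r$ and checks that they all lie in the orbit of $T_3$; the contact step needs a little extra care precisely because at $T_3$ several of the shortest geodesics are non-unique. Everything else is a one-line estimate, the explicit derivative above, or soft compactness, so this is where I expect the real work to lie.
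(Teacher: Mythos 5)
Your proposal is correct and is essentially the paper's own argument: your deformation $P_1=(0,0)$, $P_2=(\tfrac12,t)$, $P_3=(t,\tfrac12)$ is exactly the paper's family $T_{\alpha}$ under $t=\tfrac{\tan\alpha}{2}$, your positive-derivative computation at $T_3$ is the same step showing $T_3$ is never a minimizer, and the final count rests on the same external input (uniqueness of $T_3$ as the maximizer of the minimal distance), which the paper does not reprove but cites from Dickinson--Guillot--Keaton--Xhumari, Proposition 4.2 --- so you should cite that rather than attempt the packing classification yourself. The only cosmetic difference is the endgame: you conclude boundedness of each configuration's optimality set by soft compactness of the reduced configuration space, while the paper quantifies the same point with an explicit $\eta>0$ gap in the minimal distance; both are fine.
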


Phase transitions themselves are not uncommon for these types of problems. Melnyk-Knop-Smith \cite{melnyk} remarked in 1977 that there appear to be two optimal solutions for $5$ points on $\mathbb{S}^2$, one that is optimal for $p \leq 15.04\dots$ and the other for larger values of $p$ (a computer-assisted proof was recently given by Schwartz \cite{schw2}). It appears as if $n=3$ points on $\mathbb{T}^2$ appear to give rise to an infinite number of phase transitions which, to the best of our knowledge, has not yet been observed in any other setting. It also means that a complete characterization of the minimizers for all $p$ might be difficult; on the other hand, for small values of $p$, the phase transitions might behave as they do on $\mathbb{S}^2$ with minimizers on either side of the phase transition that remain global minimizers for an entire range of $p$.

\subsection{The case $n=4$.} The case of $4$ points has a similar degree of complexity. For small values of $p$ the natural candidate extremizer are 4 points arranged in a square of side-length $1/2$. 
For large $p$, the extremal configuration is close to the configuration that maximizes the minimal distance between any two points. There is a one-parameter family of sets $S_{\alpha}$ interpolating between these two examples
$$ S_{\alpha} = \left\{ (0,0), \left( \tfrac{\tan{\alpha}}{2} , \tfrac12 \right), \left( \tfrac{1}{2}, \tfrac{ \tan{\alpha}}{2} \right), \left( \tfrac{1 + \tan{\alpha}}{2} , \tfrac{1 + \tan{\alpha}}{2} \right) \right\} \qquad 0 \leq \alpha \leq \tfrac{\pi}{12}.$$

$S_{0}$ corresponds to the square, $S_{\pi/12}$ is the best packing of 4 equal circles in $\mathbb{T}^2$. The one-parameter family has the property that the minimal (toroidal) distance between any pair of points is strictly increasing along the flow; we found the family discussed in Dickinson-Guillot-Keaton-Xhumari \cite[Figure 6]{dick}.

\begin{center}
    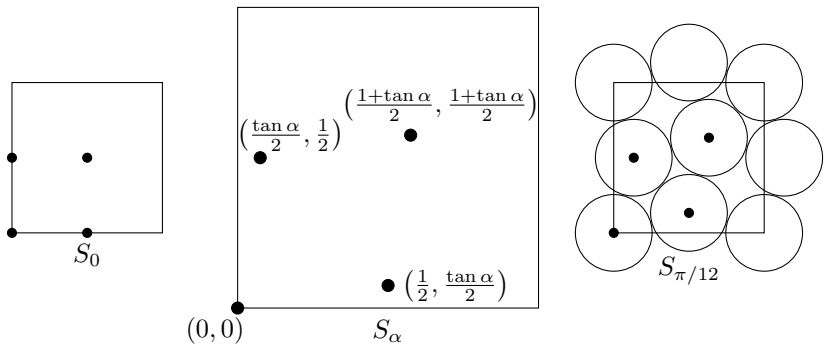
\begin{figure}[h!]
     \begin{tikzpicture}
         \draw (0,0) -- (2,0) -- (2,2) -- (0,2) -- (0,0);
       \draw (3,-1) -- (7,-1) -- (7,3) -- (3,3) -- (3,-1);
         \draw (8,0) -- (10,0) -- (10,2) -- (8,2) -- (8,0);
    \filldraw (0,0) circle (0.06cm);
   \filldraw (1,0) circle (0.06cm);
    \filldraw (1,1) circle (0.06cm);
   \filldraw (0,1) circle (0.06cm);
       \node at (2.7, -1.3) {$(0,0)$};
    \filldraw (3,-1) circle (0.08cm);
        \filldraw (5, -1 +0.3) circle (0.08cm);
        \filldraw (3 +0.3,1) circle (0.08cm);
       \filldraw (3 +2.3,-1 + 2.3) circle (0.08cm); 
      \node at (5.7, 1.7) {$\left( \tfrac{1 + \tan{\alpha}}{2}, \tfrac{1 +\tan{\alpha}}{2} \right)$};
       \node at (5.9, -0.7) {$\left( \tfrac{1}{2}, \tfrac{\tan{\alpha}}{2} \right)$};
        \node at (3.7, 1.3) {$\left(\tfrac{\tan{\alpha}}{2}, \tfrac{1}{2} \right)$};
      \filldraw (8,0) circle (0.06cm);
      \filldraw (9,0.266) circle (0.06cm);
    \filldraw (8.266, 1) circle (0.06cm);
     \filldraw (9.266, 1.266) circle (0.06cm);
     \draw (8,0) circle (0.51cm);
     \draw (10,2) circle (0.51cm);
          \draw (8,2) circle (0.51cm);
            \draw (10,0) circle (0.51cm);   
     \draw (9,0.2660) circle (0.51cm); 
       \draw (8.266,1) circle (0.51cm);  
     \draw (9.266, 1.2660) circle (0.51cm);     
      \draw (10.266, 1) circle (0.51cm);  
            \draw (9, 2.266) circle (0.51cm);
    \node at (1, -0.3) {$S_{0}$};
    \node at (5, -1.3) {$S_{\alpha}$};
    \node at (9, -0.5) {$S_{\pi/12}$};
     \end{tikzpicture}
        \caption{A one-parameter family, $0 \leq \alpha \leq \pi/12$.}
        \label{fig:n4}
    \end{figure}
\end{center}

The set $S_{\alpha}$ has a Riesz energy that is easy to compute: the distance between $(s_1, s_2), (s_1, s_3), (s_2, s_4)$ and $(s_3, s_4)$ is $\sec{(\alpha})/2$. The distance between $(s_1, s_4)$ and $(s_2,s_3)$ is $(1-\tan{\alpha})/\sqrt{2}$. Therefore,
$$ \sum_{i,j=1 \atop i < j}^{4} \frac{1}{\|x_i - x_j\|^p} = 4 \left( \frac{2}{\sec{\alpha}} \right)^p + 2 \left(\frac{\sqrt{2}}{1 - \tan{\alpha}}\right)^p.$$
Another natural candidate set is the square with two of the points offset by $1/4$, i.e.
$ S_{\mbox{\tiny shift}} = \left\{ (0,0), (\tfrac14, \tfrac12), (\tfrac12, 0), (\tfrac34, \tfrac12)\right\}$
with energy
$$ \sum_{i,j=1 \atop i < j}^{4} \frac{1}{\|x_i - x_j\|^p} = 2 \cdot 2^p + 4 \left( \frac{4}{\sqrt{5}} \right)^p.$$
These examples imply the following observations.
\begin{enumerate}
    \item  For $0 < p < 4.506...$ the square lattice $S_0$ has a lower energy than $S_{\mbox{\tiny shift}}$ and any other set $S_{\alpha}$ from the one-parameter family.
    \item For $4.506... \leq p \leq 26.3...$ the set $S_{\mbox{\tiny shift}}$ has a smaller energy than all $S_{\alpha}$. 
    \item For $p > 26.3...$ there exists an $0 < \alpha < \pi/12$ such that $S_{\alpha}$ has the smallest energy among all $S_{\alpha}$ (which is also smaller than the energy of $S_{\mbox{\tiny shift}}$). 
\end{enumerate}

 However, just as in the case of $n=3$, when $n=4$ then large values of $p$ admit an infinite number of different global minimizers.
 
\begin{theorem}
Every global minimizer of
 $$ \min_{x_1, x_2, x_3, x_4 \in \mathbb{R}^2} \quad \sum_{i,j=1 \atop i < j}^{4} \frac{1}{\|x_i - x_j\|^p}$$
  can only be a minimizer for a finite interval of $p$. As $p$ varies, there are infinitely many distinct global minimizers (even after factoring out the symmetries).
\end{theorem}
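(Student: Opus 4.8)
The plan is to reduce the statement to two facts about the optimal packing of four disks in $\mathbb{T}^2$. Write $\delta^\ast := \sec(\pi/12)/2$ for the minimal pairwise distance occurring in $S_{\pi/12}$. A direct check of the eight representatives of the four points (using that at $\alpha=\pi/12$ one has $\sec(\pi/12)/2=(1-\tan(\pi/12))/\sqrt2$) shows that in $S_{\pi/12}$ \emph{all six} pairwise toroidal distances equal $\delta^\ast$, so $E(S_{\pi/12},p)=6\,(\delta^\ast)^{-p}$. The two facts I want are: (A) $\delta^\ast$ is the largest possible value of $\min_{i\neq j}\|x_i-x_j\|$ over all four-point configurations on $\mathbb{T}^2$, attained — up to the symmetries — only by $S_{\pi/12}$ (equivalently, $S_{\pi/12}$ is the unique optimal packing of four equal disks in $\mathbb{T}^2$; this is the endpoint property of the family, cf.\ Dickinson--Guillot--Keaton--Xhumari); and (B) for every finite $p>0$, $S_{\pi/12}$ is \emph{not} a global minimizer of $E(\cdot,p)$. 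Granting (A) and (B), a soft compactness argument finishes the proof.

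To prove (B) I would differentiate the energy along the one-parameter family $S_\alpha$ at its endpoint. From the formula already recorded in the text,
$$E(S_\alpha,p)=4\,(2\cos\alpha)^p+2\Big(\tfrac{\sqrt2}{1-\tan\alpha}\Big)^p,$$
and since $2\cos(\pi/12)=\sqrt2/(1-\tan(\pi/12))=:\lambda$, one computes (using $\sec^2(\pi/12)/(1-\tan(\pi/12))^2=(8-4\sqrt3)/(4-2\sqrt3)=2$)
$$\frac{d}{d\alpha}E(S_\alpha,p)\Big|_{\alpha=\pi/12}=p\,\lambda^{p-1}\big(4\sqrt2-8\sin\tfrac{\pi}{12}\big)=p\,\lambda^{p-1}\cdot 2\sqrt2\,(3-\sqrt3)>0 .$$
Hence for $\alpha$ slightly below $\pi/12$ one has $E(S_\alpha,p)<E(S_{\pi/12},p)$, so $S_{\pi/12}$ is not a global minimizer for any $p\in(0,\infty)$. (The same computation shows $S_{\pi/12}$ is not even a critical point of $E(\cdot,p)$, which is the cleaner way to say it, but is not needed.)

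With (A) and (B) in hand, suppose a configuration $X$ were a global minimizer of $E(\cdot,p)$ for $p$ ranging over an unbounded set of values. Writing $d(X):=\min_{i\neq j}\|x_i-x_j\|$, for each such $p$ we have $d(X)^{-p}\le E(X,p)=\min E(\cdot,p)\le E(S_{\pi/12},p)=6\,(\delta^\ast)^{-p}$, i.e.\ $d(X)\ge \delta^\ast\, 6^{-1/p}$; letting $p\to\infty$ through that set gives $d(X)\ge\delta^\ast$, hence $d(X)=\delta^\ast$ by maximality, hence $X=S_{\pi/12}$ up to symmetry by (A) — contradicting (B). So every configuration is a global minimizer for at most a bounded set of $p$. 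Since a global minimizer exists for every $p>0$ (lower semicontinuity of $E(\cdot,p)$ on the compact set $(\mathbb{T}^2)^4$), and each geometrically distinct configuration accounts for only a bounded range of $p$, there must be infinitely many geometrically distinct global minimizers as $p$ varies. The sharper phrasing ``finite interval'' then follows from the additional observation that for a fixed $X$ the set $\{p:\ E(X,p)=\min E(\cdot,p)\}$ is an interval, which one gets by controlling the finitely many sign changes of $p\mapsto E(X,p)-E(Y,p)$ (a difference of positive sums of exponentials in $p$) against the competing configurations $Y$; if one is content with ``bounded range of $p$'' the argument above already suffices.

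I expect the substantive input to be (A): the uniqueness up to symmetry of the optimal four-disk packing of $\mathbb{T}^2$ is the only place genuine geometry enters, and is the step most in need of a careful reference or a self-contained argument; everything else (the differentiation in (B), the compactness at the end) is routine. A secondary nuisance is the wording: the fully rigorous core statement is that no configuration is a global minimizer for an unbounded set of $p$ and that infinitely many configurations occur, and upgrading ``bounded set'' to ``interval'' requires the extra bookkeeping indicated above. The $n=3$ case is handled identically, with $T_3$ in place of $S_{\pi/12}$ and the analogous family through $T_3$ supplying a nonvanishing derivative.
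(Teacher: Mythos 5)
Your proposal is correct and follows essentially the same route as the paper: rule out $S_{\pi/12}$ via the positive derivative of $E(S_\alpha,p)$ at $\alpha=\pi/12$ (your value $p\,\lambda^{p-1}\cdot 2\sqrt2\,(3-\sqrt3)$ simplifies exactly to the paper's $\sqrt3\,p\,2^{1+p/2}(\sqrt3-1)^{2-p}$), then use the Dickinson--Guillot--Keaton--Xhumari uniqueness of the optimal four-disk packing to force any other minimizer to have a strictly smaller minimal distance and hence strictly larger energy than $6(\delta^\ast)^{-p}$ for $p$ large. The only difference is cosmetic (your contradiction/limiting phrasing versus the paper's direct $\eta$-comparison), and your remark that the proof really yields ``bounded set of $p$'' rather than ``interval'' is a fair observation that applies equally to the paper's own argument.
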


It is not surprising that, as $p \rightarrow \infty$, the global minimizers get closer and closer to $S_{\pi/12}$ however, they are always distinct from $S_{\pi/12}$. Our argument shows that there exist at least a countable number of minimizers; it seems reasonable to assume that there are uncountably many and that they vary continuously with $p$. As (weakly) supporting evidence we note that the $\alpha$ attaining the minimal energy within the family $S_{\alpha}$ appears to behave approximately like $\alpha(p) = \pi/12 - c/p$ for some $c \sim 0.6$.

\subsection{The case $n=5$.} \label{sec:fibo} The case of $n=5$ points is highly nontrivial even on the sphere $\mathbb{S}^2$ and was solved by Schwartz \cite{schwartz, schw2} with a computer-assisted proof; no `simple' proof is known. 

\begin{center}
    \begin{figure}[h!]
  \begin{tikzpicture}
      \node at (0,0) {\includegraphics[width =0.3\textwidth]{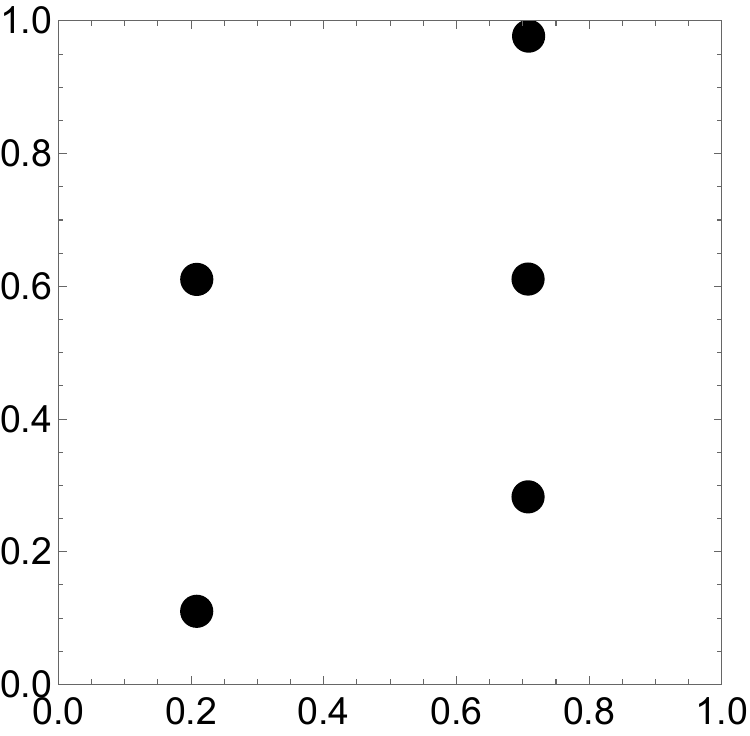}};
       \node at (4,0) {\includegraphics[width =0.3\textwidth]{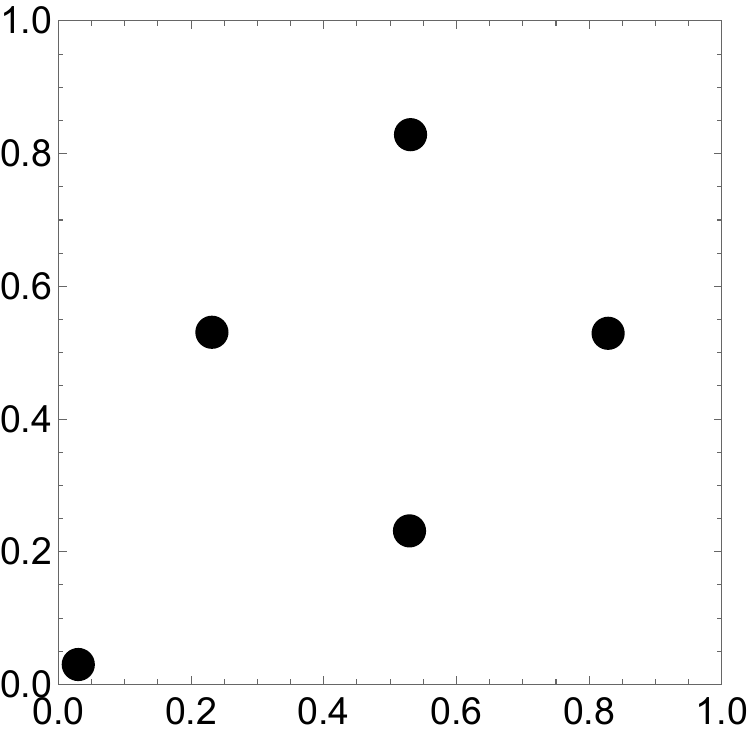}};
       \node at (8,0) {\includegraphics[width =0.3\textwidth]{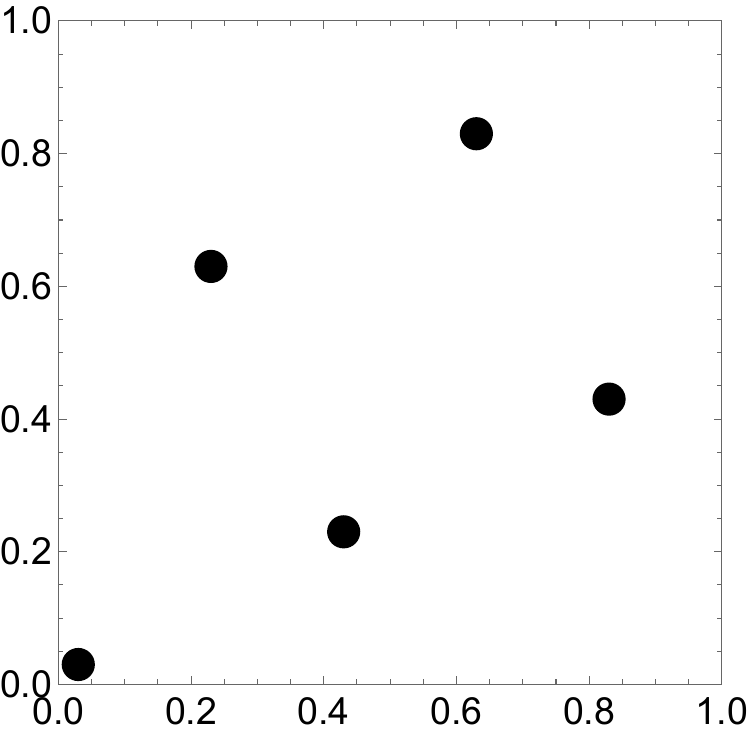}};
  \end{tikzpicture}
        \caption{Candidates for optimal configurations when $n=5$. }
        \label{fig:n5}
    \end{figure}
\end{center}

The minimal energy configuration has to converge, as $p \rightarrow \infty$, to the solution of the disk packing problem. This solution is given by the Fibonacci set
$$ F_5 = \left\{ (\tfrac{0}{5},\tfrac{0}{5}), \left( \tfrac{1}{5}, \tfrac35\right),  \left( \tfrac{2}{5}, \tfrac15\right), \left( \tfrac{3}{5}, \tfrac45\right),  \left( \tfrac{4}{5}, \tfrac45\right)\right\}.$$
It was conjectured by Dickinson (1994, \cite{dick2}) and Melissen (1997, \cite{mel}) that for any set of five points one has
$$ \max_{x_1, \dots, x_5 \in \mathbb{T}^2} ~\min_{i \neq j} \|x_i - x_j\| \leq \frac{1}{\sqrt{5}}$$
with equality if and only if the five points are $F_5$ (up to symmetries). This conjecture was proven by Dickinson-Guillot-Keaton-Xhumari \cite{dick}.
However, in contrast to the two other cases $n=3,4$, we can show that the minimal energy configuration for all $p$ sufficiently large is \textit{exactly} $F_5$ (up to symmetries).

\begin{theorem} \label{prop:5} There exists $p_0 > 0$ such that for all $p \geq p_0$, the minimal energy configuration is given by the Fibonacci set $F_5$: for any $x_1, \dots, x_5 \in \mathbb{T}^2$
$$ \sum_{i,j=1 \atop i \neq j}^{5} \frac{1}{\|x_i - x_j\|^p} \geq 20 \cdot 5^{p/2}.$$
\end{theorem}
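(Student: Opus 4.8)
\subsection*{Proof proposal}

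The plan is to prove that the Fibonacci set $F_5$ is, for $p$ large, the unique global minimizer; this gives the stated bound because all $\binom{5}{2}$ pairwise toroidal distances in $F_5$ equal $1/\sqrt5$ (indeed $F_5$ forms a square sublattice of $\mathbb{T}^2$), so that $E(F_5)=20\cdot 5^{p/2}$. The first move is to localize. Since $E(x)\ge 2\,\delta(x)^{-p}$ with $\delta(x)=\min_{i\ne j}\|x_i-x_j\|$, and since by the Dickinson--Guillot--Keaton--Xhumari theorem $\delta(x)\le 1/\sqrt5$ with equality only at $F_5$ (up to symmetries), compactness of configuration space gives $\delta(x)\le(1-\eta_0)/\sqrt5$ for every $x$ at distance $\ge\rho_0$ from $F_5$, whence $E(x)\ge 2(1-\eta_0)^{-p}5^{p/2}\ge 20\cdot 5^{p/2}$ as soon as $(1-\eta_0)^{-p}\ge 10$, i.e. $p\ge p_1(\rho_0)$. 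It then remains to treat configurations $x_i=z_i+\xi_i$, where $z_i$ are the points of $F_5$ and $\xi$ is a small displacement; factoring out translations, $\xi$ ranges over an $8$-dimensional space with $|\xi|\le\rho_0$.

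For the local analysis I would write $\sqrt5\,d_{ij}(\xi)=1+L_{ij}(\xi)+R_{ij}(\xi)$ with $L_{ij}$ linear and $|R_{ij}(\xi)|\lesssim|\xi|^2$. Two structural facts drive everything: (i) $\sum_{i<j}L_{ij}(\xi)\equiv 0$, the infinitesimal form of the statement that a lattice configuration is a critical point of $E$ for every $p$; and (ii) the quadratic form $Q(\xi):=\sum_{i<j}L_{ij}(\xi)^2$ is positive definite on the $8$-dimensional space, because $Q(\xi)=0$ would force every $d_{ij}$ to be stationary along $\xi$, making $\xi$ an infinitesimal isometry of the flat torus, hence a translation. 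In coordinates adapted to $F_5$ the form $Q$ decouples into two copies of $5\times$(Laplacian of the $5$-cycle $C_5$), so its spectral gap $\lambda_0=5\,(2-2\cos\tfrac{2\pi}{5})$ is explicit. In these terms the goal becomes the sharp inequality $\sum_{i<j}\bigl(1+L_{ij}(\xi)+R_{ij}(\xi)\bigr)^{-p}\ge 10$, with equality only at $\xi=0$.

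I would then split the neighborhood by the size of $p|\xi|$. If $p|\xi|\le c_3$ for a small fixed constant: a second-order expansion of $(1+u)^{-p}$ together with $\sum L_{ij}=0$ gives $\sum(\sqrt5 d_{ij})^{-p}=10+\tfrac{p^2}{2}Q(\xi)+(\text{error})$, and in this range the error is dominated by $\tfrac{p^2}{4}Q(\xi)$, so the sum exceeds $10$ unless $\xi=0$. If $c_3\le p|\xi|\le T$ for another fixed constant: now $p|\xi|^2\le T^2/p\to 0$, so the quadratic corrections shift the exponents by only $O(1/p)$ and $\sum(\sqrt5 d_{ij})^{-p}=\sum e^{-pL_{ij}(\xi)}+O(1/p)$; by AM--GM (using $\sum L_{ij}=0$) the main term is at least $10+c\sum\bigl(pL_{ij}(\xi)\bigr)^2=10+c\,p^2Q(\xi)\ge 10+c\lambda_0 c_3^2$, a fixed surplus that absorbs the $O(1/p)$. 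If $p|\xi|\ge T$ (still inside the fixed neighborhood): since $\sum L_{ij}=0$ and $Q$ is positive definite, some $L_{i_0j_0}(\xi)\le -c'|\xi|$, so $\sum(\sqrt5 d_{ij})^{-p}\ge e^{-O(p|\xi|^2)}\sum e^{-pL_{ij}(\xi)}\ge e^{-O(p|\xi|^2)}e^{c'p|\xi|}\ge 10$ once $p|\xi|\ge T$ and $\rho_0$ is small. Choosing $c_3,T,\rho_0$ and then $p_0\ge p_1(\rho_0)$ in turn closes the argument, and tracking the constraints makes $p_0$ explicit.

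The hard part is precisely this case split, and within it the middle window $p|\xi|\sim 1$: a near-minimizer can lie within $O(1/p)$ of $F_5$, which is too far out for the Hessian expansion to be legitimate — the third-order terms scale like $p^3|\xi|^3$ and are not negligible — yet too close for the crude bound $E\ge 2\,\delta^{-p}$ to beat $20\cdot 5^{p/2}$, since $\delta(x)$ can be within $O(1/p)$ of $1/\sqrt5$. What rescues the argument is exactly the interplay of the two facts about $F_5$ (it is a critical point, so AM--GM applies with geometric mean precisely $1$; and its Hessian is nondegenerate with a known spectral gap) with the fortunate scaling $p|\xi|^2=o(1)$ throughout that window.
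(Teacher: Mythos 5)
Your proposal is correct and shares the paper's overall skeleton, but the way you close the local analysis is genuinely different. Both arguments start the same way: the Dickinson--Guillot--Keaton--Xhumari packing theorem plus compactness confines near-minimizers (for large $p$) to a small neighborhood of $F_5$, and both rest on the same two structural facts -- the first-order variation of $\sum_{i<j}\|x_i-x_j\|$ vanishes at $F_5$ (the paper's Lemma~1, your fact (i)), and a non-trivial perturbation changes some distance at first order (the paper's Lemma~2, your positive definiteness of $Q$). Where you diverge: the paper proves the non-degeneracy by a geometric case argument about which neighbor must move and in which direction, and then finishes with a single application of Jensen's inequality that collapses the problem to a one-variable function $h(x)=(1/\sqrt5-x)^{-p}+9(1/\sqrt5+x/9+cx^2)^{-p}$, whose monotonicity for large $p$ follows from $h'(0)=0$, $h''(0)>0$ and a mean-value bound; the decisive point is the same $p^2$-versus-$p$ competition you highlight. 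You instead identify the full linearized structure -- the ten contact directions split into two orthogonal classes, each forming a $5$-cycle, so $Q$ decouples into two cycle Laplacians with explicit gap -- and then run a three-regime analysis in $p|\xi|$ (Hessian expansion, exponential/AM--GM regime using $\sum L_{ij}=0$, and a single-term bound for $p|\xi|$ large). Your one-line justification of positive definiteness via ``infinitesimal isometry of the torus'' is too glib on its own (one must rule out a rotational flex, which does exist for $K_5$ in the plane), but the two-cycle decoupling you state immediately afterwards is precisely the correct proof, and is arguably cleaner and more informative than the paper's Lemma~2. The trade-off: the paper's Jensen reduction avoids any case split and needs only the crude ``some distance shrinks by $c\varepsilon$'' statement, while your route yields explicit spectral constants in the local part at the cost of more bookkeeping across the three windows. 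One claim you should temper: tracking your constants does \emph{not} make $p_0$ explicit, because the localization step (the stability constant $\eta_0(\rho_0)$ extracted by compactness from the uniqueness part of the packing theorem) is non-quantitative -- the same caveat the paper itself makes about $p_0$.
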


Our proof does not give any information about $p_0$ (basic numerical experiments show that $p_0 \geq 4.1$). Theorem 4 uses the result of Dickinson-Guillot-Keaton-Xhumari \cite{dick}. To make the argument quantitative and get an explicit bound on $p_0$, one would need (at least following our line of reasoning) a stability version that describes the `second-best' solution to the packing problem: if $x_1, \dots, x_5 \in \mathbb{T}^2$ are not close to $F_5$ (in whatever metric one desires), then 
$$ \min_{i \neq j} \|x_i - x_j\| \leq \frac{1}{\sqrt{5}} - \varepsilon_0.$$
 The truth of this stability version follows from compactness: if the statement was false, then one could let $\varepsilon \rightarrow 0^+$, obtain a sequence of configurations that are all far from $F_5$, compactness of $(\mathbb{T}^2)^5$ enforces the existence of a convergent subsequence which would lead to a second extremal configuration different from $F_5$ which is a contradiction. However, as is customary for such arguments, invoking compactness makes it impossible to get quantitative control on $\varepsilon_0$.

\subsection{Larger values of $n$.} The best-packing problem on $\mathbb{T}^2$, the limit of our problem as $p \rightarrow \infty$ has been studied for larger values of $n$. Musin-Nikitenko \cite{musin} solve the problem for $n=6,7,8$ and conjecture an optimal configuration for $n=9$. Connelly-Funkhouser-Kuperberg-Solomonides \cite{conn} arrive at the same conjecture for $n=9$ and conjecture optimal configurations for $10 \leq n \leq 16$. 
Variations of the problem have also been studied by Heppes \cite{heppes} and
Lubachevsky-Graham-Stillinger \cite{lub}.

   \begin{center}
   \begin{figure}[h!]
        \begin{tikzpicture}[scale=0.8]
        \node at (6,1.5) {\includegraphics[width=0.3\textwidth]{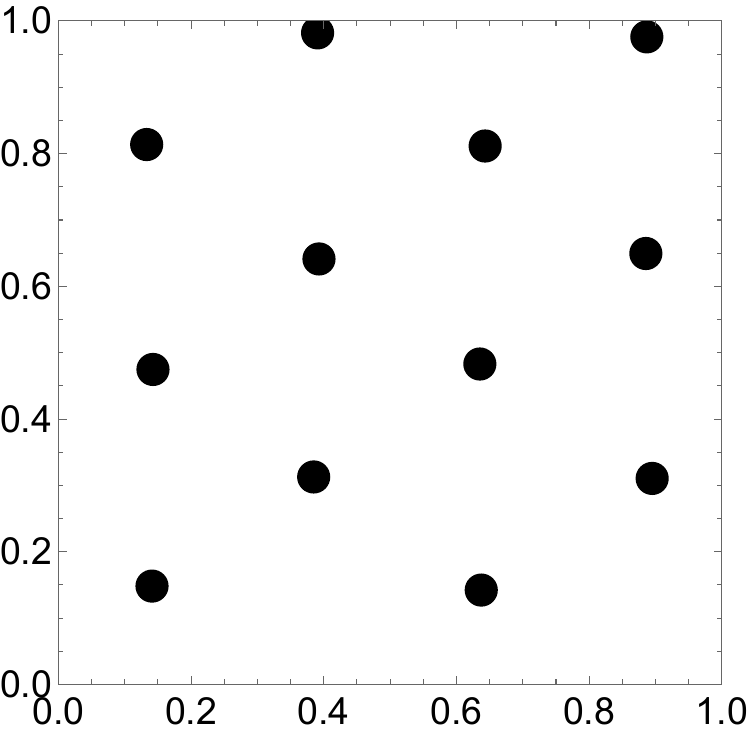}};
            \draw (0,0) -- (0,3);
            \filldraw (0, 0.5) circle (0.08cm);
            \filldraw (0, 2.5) circle (0.08cm);
           \filldraw (0, 1.5) circle (0.08cm);   
             \draw (0.86,0) -- (0.86,3); 
           \filldraw (0.86, 1) circle (0.08cm); 
            \filldraw (0.86, 2) circle (0.08cm);   
           \draw (1.73,0) -- (1.73,3);   
               \filldraw (1.73, 0.5) circle (0.08cm);
            \filldraw (1.73, 2.5) circle (0.08cm);
           \filldraw (1.73, 1.5) circle (0.08cm);          
                 \draw [<->] (0,-0.3) -- (0.8, -0.3);  
           \node at (0.4, -0.6) {$1/m$};
           \draw [<->] (-0.4, 0.5) -- (-0.4, 1.5);
           \node at (-0.8, 1) {$1/n$};
            \node at (12,1.5) {\includegraphics[width=0.3\textwidth]{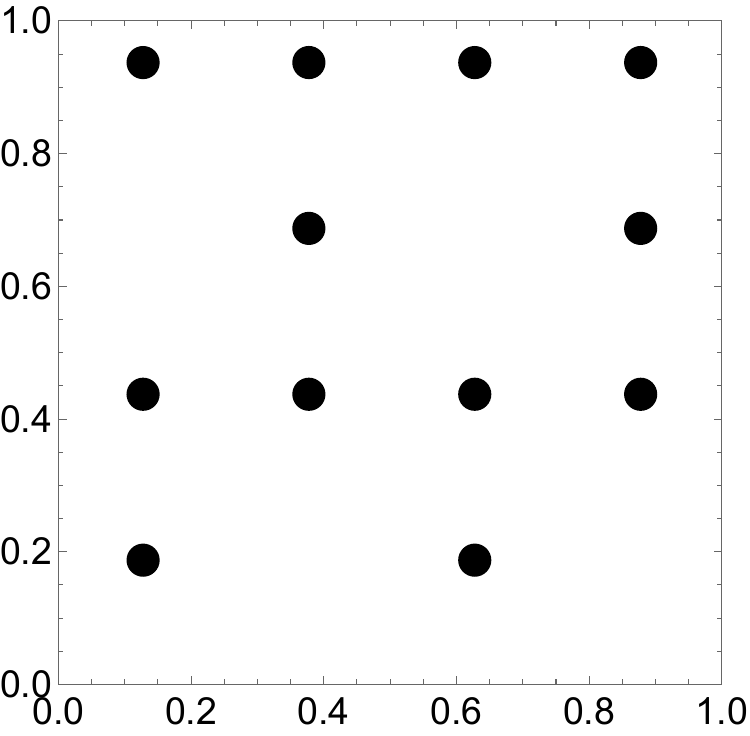}};
           \end{tikzpicture}
        \caption{Left: a local picture of a `Type I Packing' \cite{conn}. Middle: Type I packing for 12 points. Right: a configuration of 12 points with smaller logarithmic energy than the Type I packing.}
        \label{fig:special}
    \end{figure}    
    \end{center}
    
The paper \cite{conn} discusses two special families of configurations (`Type I Packing' and `Type II Packing') which only exist for certain values of $n$ and deserve to be highlighted. These configurations are particularly good when it comes to maximizing the minimal distance between any pair of points; this makes them natural candidates when it comes to minimizing the $p-$energy for $p$ large. Indeed, at least the example corresponding to 12 points (Fig. \ref{fig:special}, middle) is automatically discoverable using gradient descent with a random initial set and $p = 4$. Small values of $p$ can be very different: in the example above, 12 points on $\mathbb{T}^2$ the logarithmic energy is smaller for a grid-like arrangement (Fig. \ref{fig:special}, right).

\section{Proof of Theorem 1}
\begin{proof} The proof of Theorem 1 has three steps.
\begin{enumerate}
    \item We construct, for suitable $n \in \mathbb{N}$, a family of $c_2 \exp(c_3 \sqrt{n})$ many different initial configurations of $n$ points on $\mathbb{T}^2$. Most will not be critical points.
    \item We start (continuous) gradient descent with a particular configuration. The gradient flow converges to a critical point.
    \item The main part of the argument consists in showing that we are able to reconstruct the initial configuration from the critical point recovered by gradient descent. Therefore the number of critical points has to be at least as large as the number of initial configurations and the result follows.
\end{enumerate}

The proof is structured so as to reflect these three steps. There are various times when a constant has to be chosen and we will pick concrete constants for which the statement is true without trying to optimize the constant.\\

\textbf{Step 1.}  Pick an integer $m \in \mathbb{N}$ which is also a multiple of 10 and partition $\mathbb{T}^2$ using $m$ parallel lines of width $1/m$. After that, we place $4m/5$ points equispaced on each of the $m$ lines (for a total of $ n = 4m^2/5$ points placed in $\mathbb{T}^2$). Moreover, we arrange the points in such a way that they are off-set on consecutive lines as shown in Figure \ref{fig:config}. Since $m$ is a multiple of 10 and thus even, this is always possible. 

   \begin{center}
   \begin{figure}[h!]
        \begin{tikzpicture}
            \draw (0-6,0) -- (0-6,3) -- (3-6,3) -- (3-6,0) -- (0-6,0);
            \draw [ultra thick] (0-6,0) -- (0-6,3);
             \draw [ultra thick] (0.5-6,0) -- (0.5-6,3);   
             \draw [ultra thick] (1-6,0) -- (1-6,3);
             \draw [ultra thick] (1.5-6,0) -- (1.5-6,3);   
                \draw [ultra thick] (2-6,0) -- (2-6,3);
             \draw [ultra thick] (2.5-6,0) -- (2.5-6,3); 
           \draw [<->] (0.5-6,-0.3) -- (1-6, -0.3);  
           \node at (0.75-6, -0.6) {$1/m$};
                       \draw (0,0) -- (0,3);
            \filldraw (0, 0.5) circle (0.08cm);
            \filldraw (0, 2.5) circle (0.08cm);
           \filldraw (0, 1.5) circle (0.08cm);   
             \draw (0.86,0) -- (0.86,3); 
                        \filldraw (0.86, 0) circle (0.08cm);
                                   \filldraw (0.86, 3) circle (0.08cm); 
           \filldraw (0.86, 1) circle (0.08cm); 
            \filldraw (0.86, 2) circle (0.08cm);   
           \draw (1.73,0) -- (1.73,3);   
               \filldraw (1.73, 0.5) circle (0.08cm);
            \filldraw (1.73, 2.5) circle (0.08cm);
           \filldraw (1.73, 1.5) circle (0.08cm);   
                 \filldraw (3*0.86, 0) circle (0.08cm);
            \filldraw (3*0.86, 3) circle (0.08cm);       
         \draw (1.73 + 0.86,0) -- (1.73 + 0.86,3);   
               \filldraw (1.73+0.86, 1) circle (0.08cm); 
            \filldraw (1.73+0.86, 2) circle (0.08cm);
                 \draw [<->] (0,-0.3) -- (0.8, -0.3);  
           \node at (0.4, -0.6) {$1/m$};
           \draw [<->] (-0.4, 0.5) -- (-0.4, 1.5);
           \node at (-0.8, 1) {\LARGE $\frac{5}{4m}$};
           \end{tikzpicture}
        \caption{$m$ closed loops of length 1 each containing $4m/5$ points that are equispaced and have an alternating offset.}
        \label{fig:config}
    \end{figure}
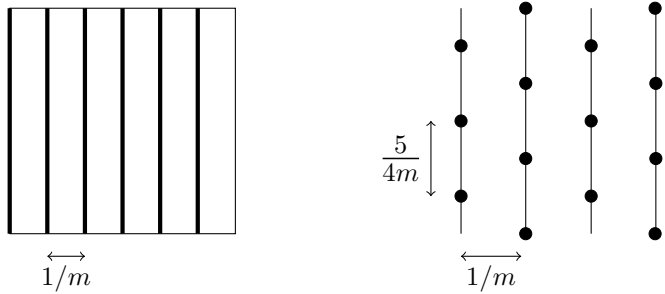    
    \end{center}

The Riesz $p-$energy of this initial configuration can be bounded from above: note that the symmetry of the configuration implies that
$$\sum_{i,j=1 \atop i \neq j}^{n} \frac{1}{\|x_i - x_j\|^p} = n \cdot \sum_{j=2}^{n} \frac{1}{\|x_1 - x_j\|^p}.$$
The nearest distance between any pair of points will be denoted by 
$$ \delta = \min_{i \neq j} \|x_i - x_j\|.$$
It is attained by adjacent points on neighboring lines since
$$ \delta = \sqrt{ \frac{1}{m^2} + \left(\frac{5}{8m}\right)^2 } = \frac{\sqrt{89/64}}{m} \sim \frac{1.179}{m} < \frac{5}{4m}.$$
This means that, for any given point, there are 4 points at distance $\delta$, two points at distance $\geq 1.06\delta$ and
all other points at least distance $1.69\delta$. Therefore
$$\sum_{i,j=1 \atop i \neq j}^{n} \frac{1}{\|x_i - x_j\|^p} \leq \frac{6 n}{\delta^p}+ \frac{n^2}{(1.69\delta)^p}.$$
More refined estimates would be possible, however, the argument is bound to lose a logarithm in another step which makes it unnecessary to further optimize this step.
 Now we modify the initial configuration by selecting a subset of $2m/5$ out of the $4m/5$ horizontal lines corresponding to an even index line, and removing the associated points. This corresponds to deleting $(2m/5) \cdot (m/2) = m^2/5$ points. We are left with a set containing exactly $3m^2/5$ points.

   \begin{center}
   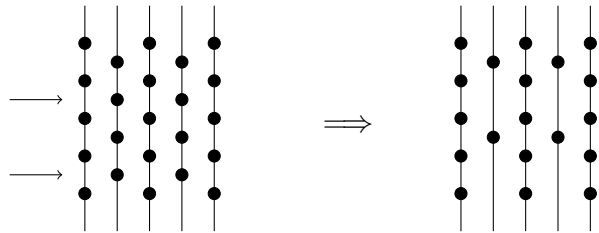
\begin{figure}[h!]
        \begin{tikzpicture}
                       \draw (0,0) -- (0,3);
            \filldraw (0, 0.5) circle (0.08cm);
                     \filldraw (0, 1) circle (0.08cm);
        \filldraw (0, 2) circle (0.08cm);
            \filldraw (0, 2.5) circle (0.08cm);
           \filldraw (0, 1.5) circle (0.08cm);   
             \draw (0.86/2,0) -- (0.86/2,3); 
         \filldraw (0.86/2, 0.75) circle (0.08cm);       
           \filldraw (0.86/2, 1.25) circle (0.08cm); 
             \filldraw (0.86/2, 1.75) circle (0.08cm); 
            \filldraw (0.86/2, 2.25) circle (0.08cm);   
                       \draw (0.86,0) -- (0.86,3);
            \filldraw (0.86, 0.5) circle (0.08cm);
                     \filldraw (0.86, 1) circle (0.08cm);
        \filldraw (0.86, 2) circle (0.08cm);
            \filldraw (0.86, 2.5) circle (0.08cm);
           \filldraw (0.86, 1.5) circle (0.08cm);  
             \draw (3*0.86/2,0) -- (3*0.86/2,3); 
         \filldraw (3*0.86/2, 0.75) circle (0.08cm);       
           \filldraw (3*0.86/2, 1.25) circle (0.08cm); 
             \filldraw (3*0.86/2, 1.75) circle (0.08cm); 
            \filldraw (3*0.86/2, 2.25) circle (0.08cm);   
                          \draw (2*0.86,0) -- (2*0.86,3);
            \filldraw (2*0.86, 0.5) circle (0.08cm);
                     \filldraw (2*0.86, 1) circle (0.08cm);
        \filldraw (2*0.86, 2) circle (0.08cm);
            \filldraw (2*0.86, 2.5) circle (0.08cm);
           \filldraw (2*0.86, 1.5) circle (0.08cm);  
           \draw[->] (-1, 0.75) -- (-0.3, 0.75);
              \draw[->] (-1, 1.75) -- (-0.3, 1.75);  
              \node at (3.5, 1.4) {\Large $\implies$};
                       \draw (0+5,0) -- (0+5,3);
            \filldraw (0+5, 0.5) circle (0.08cm);
                     \filldraw (0+5, 1) circle (0.08cm);
        \filldraw (0+5, 2) circle (0.08cm);
            \filldraw (0+5, 2.5) circle (0.08cm);
           \filldraw (0+5, 1.5) circle (0.08cm);   
             \draw (0.86/2+5,0) -- (0.86/2+5,3); 
           \filldraw (0.86/2+5, 1.25) circle (0.08cm); 
            \filldraw (0.86/2+5, 2.25) circle (0.08cm);   
                       \draw (0.86+5,0) -- (0.86+5,3);
            \filldraw (0.86+5, 0.5) circle (0.08cm);
                     \filldraw (0.86+5, 1) circle (0.08cm);
        \filldraw (0.86+5, 2) circle (0.08cm);
            \filldraw (0.86+5, 2.5) circle (0.08cm);
           \filldraw (0.86+5, 1.5) circle (0.08cm);  
             \draw (3*0.86/2+5,0) -- (3*0.86/2+5,3); 
           \filldraw (3*0.86/2+5, 1.25) circle (0.08cm); 
            \filldraw (3*0.86/2+5, 2.25) circle (0.08cm);   
                          \draw (2*0.86+5,0) -- (2*0.86+5,3);
            \filldraw (2*0.86+5, 0.5) circle (0.08cm);
                     \filldraw (2*0.86+5, 1) circle (0.08cm);
        \filldraw (2*0.86+5, 2) circle (0.08cm);
            \filldraw (2*0.86+5, 2.5) circle (0.08cm);
           \filldraw (2*0.86+5, 1.5) circle (0.08cm);           
           \end{tikzpicture}
           \caption{Deleting all the points corresponding to a horizontal line with even index.}
           \label{fig:setup}
    \end{figure}    
    \end{center}

 The number of initial configurations that can be so obtained is
 $$ \binom{\frac{4m}{5}}{\frac{2m}{5}} \geq 2^{m/2}.$$
 These configurations are all distinct.  However, if we consider global shifts in the $y-$direction, meaning taking a set of points $\mathcal{P}  \subset \mathbb{T}^2$ and sending
 $$ \mathcal{P}  \rightarrow \left\{ (p_{i,1}, p_{i,2} + y \mod 1): p_i \in \mathcal{P} \right\},$$
 then some of these configuration are no longer distinguishable. It is easy to see each initial configuration can give rise to no more than $m$ other configurations using the global shift and thus there are at least
  $$ \frac{1}{m}\binom{\frac{4m}{5}}{\frac{2m}{5}} \geq \frac{2^{m/2}}{m}$$
many initial configurations that are distinguishable even if we are only given them up to a global shift in the $y-$direction. Since removing points can only decrease the energy, we inherit the uniform bound from above and can conclude that for any such initial configuration
$$\sum_{i \neq j}^{} \frac{1}{\|x_i - x_j\|^p} \leq \frac{6 n}{\delta^p} + \frac{n^2}{(1.69\delta)^p}.$$

\textbf{Step 2.} 
For any such initial configuration, we consider the gradient flow induced by the energy $\sum_{i \neq j} 1/\|x_i - x_j\|^p$.  The first observation is that, by symmetry of the initial configuration, the gradient flow is highly restricted: each point stays on the line that it starts out on.  One way of seeing this is to write the gradient descent
$$ -\nabla E\big|_{x=x_i} = \sum_{j=1}^{n} \frac{x_i - x_j}{\|x_i - x_j\|^{p+2}}$$
and then note that for each point exerting a force on $x_i$ that is not on the same line, there is a symmetrically arranged point on the other side of the line whose force is reflected around the line (note that these forces need not cancel but, in the orientation of Fig. \ref{fig:setup}, their $x-$coordinates cancel leaving the points constrained to their respective lines). Note, moreover, that the initial configuration $P = \left\{x_1, \dots, x_n\right\} \subset \mathbb{T}^2$ is invariant under shifts in the sense that
$$ x_i \in P \implies x_i \pm \left(\frac{2}{m}, 0\right) \in P.$$
This means that the first, the third and, more generally, all the odd-numbered lines are all going to evolve identically and so will the second, the fourth and all even-numbered lines. Phrased differently, the entire dynamics is captured by the first two lines, the rest evolves identically.
The energy of the initial configuration is bounded, the energy is decreasing along the gradient flow. Since the energy is bounded from below and the configuration space is compact, the configuration will approach a critical point of the energy functional as time approaches infinity.\\

\textbf{Step 3.} It remains to show that we can recover the initial configuration from the critical point. We have already seen the entire evolution is shown in the first two lines, the remaining lines simply repeat periodically. We will only keep track of the first two lines and refer to points on the first line as red points and points on the second line as blue points. With that terminology, we might even go so far as to consider them both as lying on a single line (and interacting via a more complicated functional depending on the colors of the points), see Fig. \ref{fig:color}.

    \begin{center}
   \begin{figure}[h!]
        \begin{tikzpicture}
                       \draw (0+5,0) -- (0+5,3);
         \filldraw[red] (0+5, 0.5) circle (0.08cm);
         \filldraw[red]  (0+5, 1) circle (0.08cm);
        \filldraw[red]  (0+5, 2) circle (0.08cm);
            \filldraw[red]  (0+5, 2.5) circle (0.08cm);
           \filldraw[red]  (0+5, 1.5) circle (0.08cm);   
             \draw (0.86/2+5,0) -- (0.86/2+5,3); 
           \filldraw[blue] (0.86/2+5, 1.25) circle (0.08cm); 
            \filldraw[blue] (0.86/2+5, 2.25) circle (0.08cm);    
            \node at (6.75, 1.5) {$\implies$};
             \draw (8,0) -- (8,3);  
           \filldraw[red] (8, 0.5) circle (0.08cm);
         \filldraw[red]  (8, 1) circle (0.08cm);
        \filldraw[red]  (8, 2) circle (0.08cm);   
                    \filldraw[red]  (8, 2.5) circle (0.08cm);
           \filldraw[red]  (8, 1.5) circle (0.08cm);   
           \filldraw[blue] (8, 1.25) circle (0.08cm); 
            \filldraw[blue] (8, 2.25) circle (0.08cm);    
           \end{tikzpicture}
           \caption{First two rows of an initial configuration and a projection.}
           \label{fig:color}
    \end{figure}
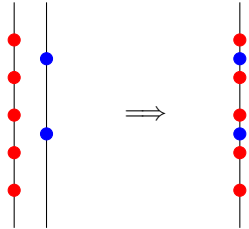    
    \end{center}

The main idea is now as follows: if $p$ is sufficiently large, then the projected evolution of the red and blue points is non-crossing. A (projected) red point and a (projected) blue point can never occur at the same position in space (i.e. on the projected line). Their relative ordering stays preserved (up to possibly jointly moving up or down in the $y-$direction). This is seen as follows: we have $n = 3m^2/5$ points in total. If projections of a red and blue point would occupy the same position, then this would mean that we have $m$ points (in $\mathbb{T}^2$) on a line. This contributes at least $m$ large terms to the energy and
$$ E \geq \frac{m}{(1/m)^p}.$$
We recall that the minimal distance in the initial configuration satisfied
$$ \delta = \sqrt{ \frac{1}{m^2} + \left(\frac{5}{8m}\right)^2 } = \frac{\sqrt{89/64}}{m} \sim \frac{1.179}{m}.$$
We can rewrite the lower bound on the energy of the critical point as
$$ E \geq \frac{m}{(1/m)^p} = \frac{m}{\delta^p}\cdot 1.179^p$$
Combining this with $m \geq \sqrt{n}$ and the previously derived upper bound on the initial configuration, we end up at
\begin{align*}
    \frac{\sqrt{n}}{\delta^p}\cdot 1.179^p \leq E(\mbox{critical}) \leq E(\mbox{initial}) \leq \frac{6 n}{\delta^p} + \frac{n^2}{(1.69\delta)^p}.
\end{align*}
This implies
$$   1.179^p \leq   6 \sqrt{n} +  \frac{n^{3/2}}{1.69^p}. $$
It is easy to see that if $1.179^p \geq 8 \sqrt{n}$, then $1.69^p \geq (1.179)^{3p} \geq 512 n^{3/2}$ and we get a contradiction. This contradiction arises as soon as 
$$ p \geq \frac{\log(8 \sqrt{n})}{\log(1.179)} \sim 3.03\log{n} + 12.62$$
which implies the desired result.
\end{proof}

\textbf{Remark.} The role of the condition $p \geq 5 \log{n}$ is clear: it ensures that no crossing of red and blue points can occur and this enables us to recover the initial configuration from the critical point. However, it appears that this condition can probably be relaxed (at least for a large class of initial configurations). We illustrate this with an example shown in Fig. \ref{fig:ill}. 
 Fig \ref{fig:ill} (left) shows the type of initial configuration that we consider in the proof: 3 lines have been removed. We can then apply gradient descent with respect to $p=15$ and end up in the critical point shown in the middle. The proof shows that (up to a global rotation in the $y-$direction) we can recover the initial configuration from the critical point because no crossing of points can ever take place, their relative order is preserved. If we now compare this to the critical point obtained when considering the logarithmic energy, we see that it also appears to be quite similar to the other two configurations. The non-crossing (presumably) also cannot take place for this particular initial configuration, however, this is not covered by the argument in its present form, some new ideas are required.

\begin{center}
    \begin{figure}[h!]
\begin{tikzpicture}
    \node at (0,0) {\includegraphics[width=0.3\textwidth]{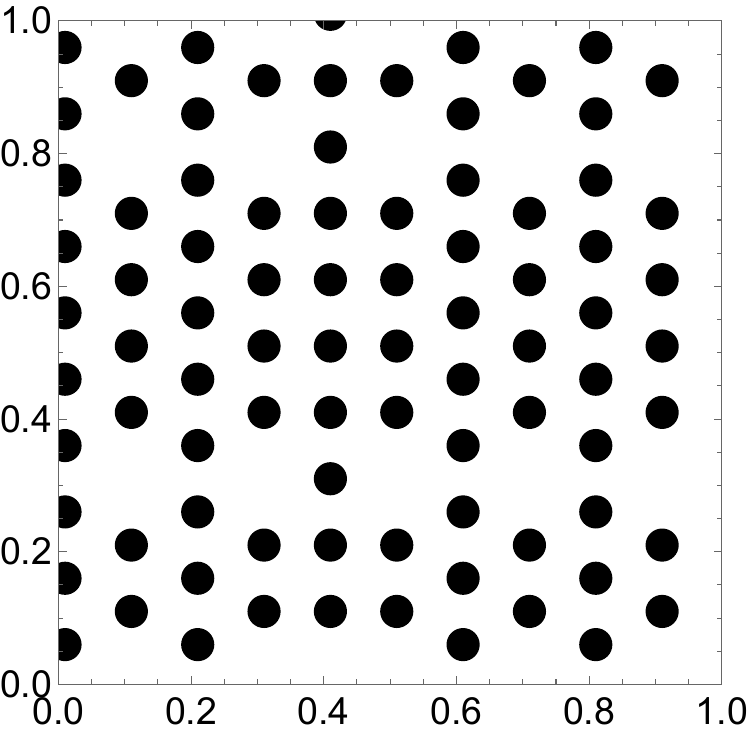}};
    \node at (0, -2.2) {initial configuration};
        \node at (4,0) {\includegraphics[width=0.3\textwidth]{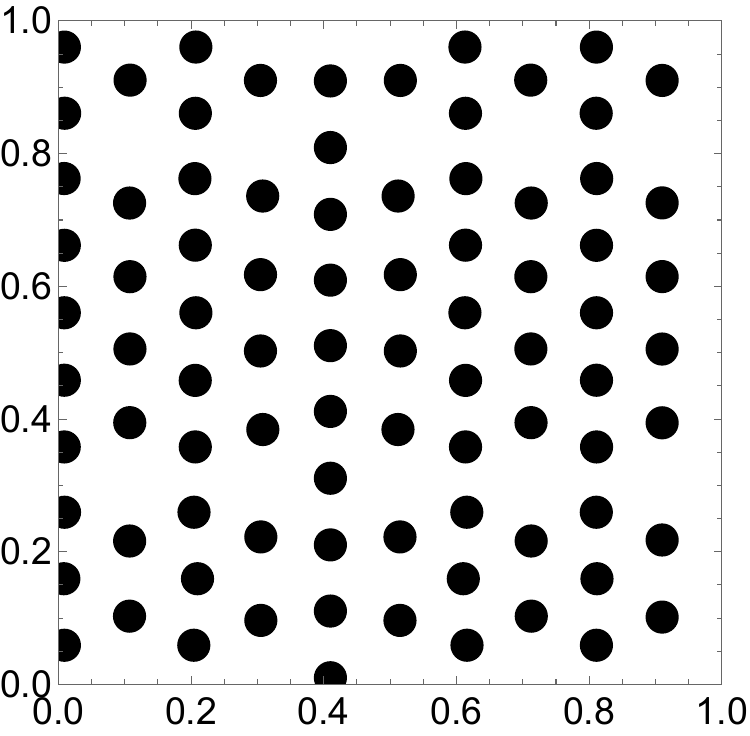}};
          \node at (4, -2.2) {$p=15$};
    \node at (8,0) {\includegraphics[width=0.3\textwidth]{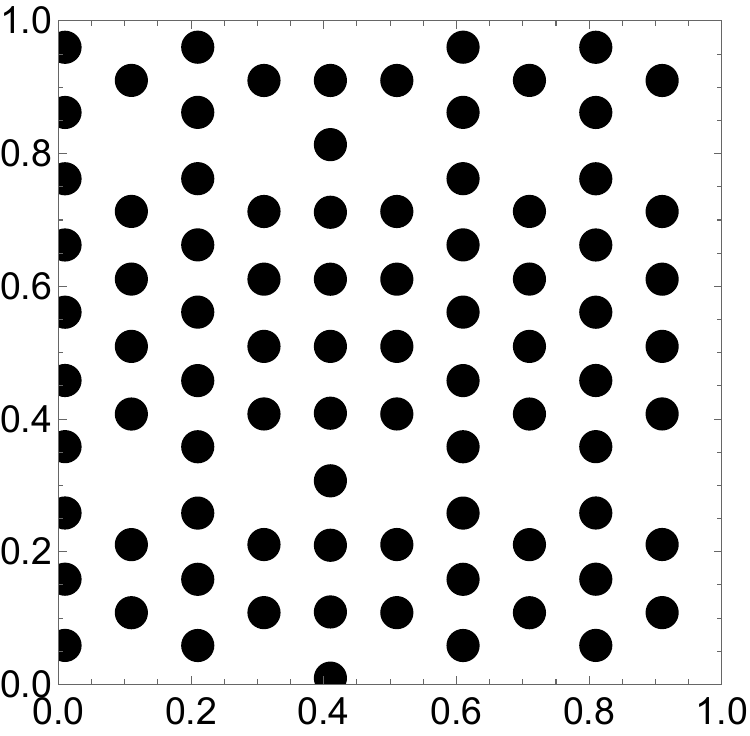}};
      \node at (8, -2.2) {$p=0$};
\end{tikzpicture}
\vspace{-10pt}
\caption{A configuration and critical points reached by gradient descent for $p=15$ (middle) and the logarithmic energy (right). }
\label{fig:ill}
    \end{figure}
\end{center}

\section{Proof of Theorem 2 and Theorem 3}
The proof of Theorem 3 can be used to prove Theorem 2, we therefore prevent the arguments in reverse order and start with $n=3$ points.

\subsection{Proof of Theorem 3, $n=4$}
\begin{proof} We use the one-parameter family $S_{\alpha}$ introduced above
$$ S_{\alpha} = \left\{ (0,0), \left( \tfrac{\tan{\alpha}}{2} , \tfrac12 \right), \left( \tfrac{1}{2}, \tfrac{ \tan{\alpha}}{2} \right), \left( \tfrac{1 + \tan{\alpha}}{2} , \tfrac{1 + \tan{\alpha}}{2} \right) \right\}$$
and the fact that for all $0 \leq \alpha \leq \pi/12$, the energy of $S_{\alpha} = \left\{s_1, s_2, s_3, s_4 \right\}$ is 
$$ \sum_{i,j=1 \atop i < j}^{4} \frac{1}{\|s_i - s_j\|^p} =4 \left( \frac{2}{\sec{\alpha}} \right)^p + 2 \left(\frac{\sqrt{2}}{1 - \tan{\alpha}}\right)^p.$$
The minimal distance between two elements in $S_{\pi/12}$ is 
$$ \min_{x,y \in S_{\pi/12} \atop x \neq y} \|x-y\| = \frac{\sqrt{6}-\sqrt{2}}{2}$$
which means that $S_{\pi/12} = \left\{s_1, s_2, s_3, s_4 \right\}$ has energy
$$ \sum_{i,j=1 \atop i < j}^{4} \frac{1}{\|s_i - s_j\|^p}  = 6 \cdot \left( \frac{2}{\sqrt{6}-\sqrt{2}}\right)^p.$$
A lengthy but straightforward computation shows that when computing the derivative of the energy $E(S_{\alpha})$ as a function of $\alpha$ at the point $\alpha = \pi/12$, then
\begin{align*}
 \frac{\partial}{\partial \alpha}  \sum_{i,j=1 \atop i < j}^{4} \frac{1}{\|s_i - s_j\|^p}  \quad \big|_{\alpha = \tfrac{\pi}{12}} &=   \frac{\sqrt{3} p \cdot 2^{1 + p/2}}{(\sqrt{3} - 1)^{p-2}}  > 0.
 \end{align*}
This means that the energy of $S_{\pi/12 - \varepsilon}$ is smaller than the energy of $S_{\pi/12}$ for all $\varepsilon >0$ sufficiently small (depending on $p$). Therefore $S_{\pi/12}$ is never a minimizer. Pick now an arbitrary value of $p$ and consider $x_1, x_2, x_3, x_4 \in \mathbb{T}^2$ to be any global minimizer of the $p-$Riesz energy. By the previous reasoning, this set has to be distinct from $S_{\pi/12}$. Since the set is distinct from $S_{\pi/12}$ and $S_{\pi/12}$ is the only set maximizing the minimal distance between any pair of points \cite[Proposition 4.3]{dick}, we have
 $$ \min_{i \neq j} \|x_i- x_j\| < \frac{\sqrt{6}-\sqrt{2}}{2}.$$
which implies that this particular configuration has $p-$energy at least
$$ \sum_{i,j=1 \atop i < j}^{4} \frac{1}{\|s_i - s_j\|^p} \geq \left( \frac{2}{\sqrt{6}-\sqrt{2}} + \eta\right)^p $$
for some $\eta > 0$. For $p$ sufficiently large, this lower bound is larger than the energy of $S_{\pi/12}$. As a consequence, each energy configuration can only be optimal for a finite range of values $p$ and, as $p$ varies from $0$ to $\infty$, there have to be infinitely many distinct minimizer.
\end{proof}

\subsection{Proof of Theorem 2, $n=3$}
\begin{proof} The case $n=3$ is almost identical to the argument for $n=4$. We use a very similar one-parameter family of points by taking $S_{\alpha}$ and erasing the last point
$$ T_{\alpha} = \left\{ (0,0), \left( \tfrac{\tan{\alpha}}{2} , \tfrac12 \right), \left( \tfrac{1}{2}, \tfrac{ \tan{\alpha}}{2} \right) \right\}$$
which we appreciate as $\left\{t_1, t_2, t_3\right\}$. A computation shows
$$ \frac{1}{\|t_1 - t_2\|^p} +  \frac{1}{\|t_1 - t_3\|^p} +  \frac{1}{\|t_2 - t_3\|^p} =  2\left( \frac{2}{\sec{\alpha}} \right)^p + \left(\frac{\sqrt{2}}{1 - \tan{\alpha}}\right)^p$$
which is exactly half the energy of the configuration $S_{\alpha}$ from the previous proof. 
In particular, the derivative at $\alpha = \pi/12$ is thus half the derivative of above and still positive
\begin{align*}
 \frac{\partial}{\partial \alpha}  \sum_{i,j=1 \atop i < j}^{3} \frac{1}{\|t_i - t_j\|^p}  \quad \big|_{\alpha = \tfrac{\pi}{12}} &=   \frac{\sqrt{3} p \cdot 2^{1 + p/2}}{(\sqrt{3} - 1)^{p-2}} > 0.
 \end{align*}
 Just as before, this shows that $T_3=T_{\pi/12}$ is never a minimizer. However,
 $$ \min_{i \neq j} \|t_i - t_j\| = \frac{\sqrt{6}-\sqrt{2}}{2}$$
 and for any set of three points distinct from $T_{\pi/12}$, using \cite[Proposition 4.2]{dick},
 $$ \min_{i \neq j} \|x_i- x_j\| < \frac{\sqrt{6}-\sqrt{2}}{2}.$$
 The argument then proceeds as above: each global minimizer for a fixed $p$ has to be distinct from $T_{\pi/12}$ which forces it to have at least one distance smaller than $(\sqrt{6} - \sqrt{2})/2$. This shows that for $p$ sufficiently large that particular configuration has a larger energy than $T_{\pi/12}$ from which we can conclude that it cannot be a minimizer for $p$ sufficiently large. Therefore, as $p$ varies from 0 to $\infty$, there have to be infinitely many different global minimizers.
\end{proof}

\textbf{Remark.} The fact that the same argument works for both $n=3$ and $n=4$ is a little bit surprising. There is an interesting and somewhat curious reason: the problem of maximizing the minimal distance between two pairs of points has the same answer for $n=3$ and $n=4$. The reason is that the extremal configuration for $n=3$ disks leaves room for a fourth disk of the same size (see Fig. \ref{fig:sur}).

\begin{center}
    \begin{figure}[h!]
     \begin{tikzpicture}
         \draw (8,0) -- (10,0) -- (10,2) -- (8,2) -- (8,0);
      \filldraw (8,0) circle (0.06cm);
      \filldraw (9,0.266) circle (0.06cm);
    \filldraw (8.266, 1) circle (0.06cm);
     \filldraw (9.266, 1.266) circle (0.03cm);
     \draw [thick] (8,0) circle (0.51cm);
     \draw (10,2) circle (0.51cm);
          \draw  (8,2) circle (0.51cm);
            \draw (10,0) circle (0.51cm);   
     \draw [thick] (9,0.2660) circle (0.51cm); 
          \draw  (9,2.2660) circle (0.51cm); 
          \draw  (11,0.2660) circle (0.51cm); 
           \draw  (10.266, 1) circle (0.51cm); 
       \draw [thick] (8.266,1) circle (0.51cm);  
     \draw [dashed] (9.266, 1.2660) circle (0.51cm);     
    \node at (9, -0.5) {$T_{\pi/12}$};
     \end{tikzpicture}
        \caption{Points maximizing minimal distance between three points (thick), copies for clarity, leave room for a fourth (dashed).}
        \label{fig:sur}
    \end{figure}
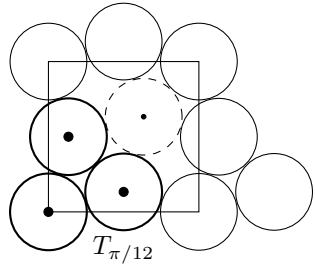
\end{center}

\section{Proof of Theorem 4}

\subsection{Preparatory statements}
The argument makes extensive use of the optimal ball packing result of Dickinson-Guillot-Keaton-Xhumari \cite{dick}. They show that the Fibonacci set on 5 points is the unique equal circle arrangement maximizing the density. This implies that for any $x_1, \dots, x_5 \in \mathbb{T}^2$
$$ \min_{i \neq j} \|x_i - x_j\| \leq \frac{1}{\sqrt{5}}$$
with equality if and only if the 5 points are the Fibonacci set
$$ p_1 = (0,0), \quad p_2 = (\tfrac15, \tfrac25), \quad p_3 = (\tfrac25, \tfrac45), \quad p_4 = (\tfrac35, \tfrac15), \quad p_5 = (\tfrac45, \tfrac35)$$
up to the usual symmetries. This implies that for any set of $5$ points
$$ \sum_{i,j=1 \atop i \neq j}^{5} \frac{1}{\|x_i - x_j\|^p}  \geq 5^{p/2}.$$
Moreover, the Fibonacci set only has distances $1/\sqrt{5}$ between its elements and thus
$$ \min_{x_1, \dots, x_5 \in \mathbb{T}^2} \quad \sum_{i,j=1 \atop i \neq j}^{5} \frac{1}{\|x_i - x_j\|^p} \leq 20 \cdot 5^{p/2}.$$
If the 5 points are not the Fibonacci set (up to symmetries), then at least one of the distances is smaller than $1/\sqrt{5}$ and thus there exists $\eta > 0$ such that
$$ \sum_{i,j=1 \atop i \neq j}^{5} \frac{1}{\|x_i - x_j\|^p}  \geq (5 + \eta)^{p/2},$$
where $\eta = \eta(x_1, \dots, x_5)$ depends on the points: this shows, just as above, that any configuration distinct from $F_5$ can only be optimal for a finite range of values of $p$. Moreover, for any $\varepsilon > 0$ there exists $p_0 > 0$ such that for all $p > p_0$ the optimal configuration of points has to satisfy
$$ \min_{i \neq j} \|x_i - x_j\|  \geq \frac{1}{\sqrt{5}} - \varepsilon.$$
Indeed, we can say a little bit more: if $x_1, \dots, x_n \in \mathbb{T}^5$ is a minimal energy configuration, then the comparison with the Fibonacci set $F_5$ implies
$$ 20 \cdot 5^{p/2} \geq \min_{i \neq j} \frac{1}{\|x_i - x_j\|^p}$$
which in turn requires, setting $\min_{x_i \neq x_j} \|x_i - x_j\| = 1/\sqrt{5} - \varepsilon$, that
$$ \frac{1}{\left( 1/\sqrt{5} - \varepsilon \right)^p} \leq 20\cdot 5^{p/2} \qquad \mbox{and thus} \quad (1 - \sqrt{5} \varepsilon)^p \geq 0.05.$$
Making the ansatz $\varepsilon = c/p$, we can argue
$$ 0.05 \leq \left(1 - \frac{\sqrt{5}c}{p}\right)^p \leq e^{-\sqrt{5} c}$$
implying that $c \geq 1$. We deduce that any energy minimizer has to satisfy
$$ \min_{i \neq j} \|x_i - x_j\|  \geq \frac{1}{\sqrt{5}} - \frac{1}{p}.$$

Compactness of the space and uniqueness of the Fibonacci set (proven in \cite{dick}) implies that it is enough to study small perturbations of the Fibonacci set $F_5$. So far, the argument is very much aligned with the behavior for $n=3,4$: when $p$ is large, any optimal configuration has to be very close to the solution of the disk packing problem. In the cases $n=3,4$ there exists an infinitesimal perturbation of the points that decreases the energy. 
It remains to show that this cannot happen for the Fibonacci set $F_5$. That argument comes in two parts (covered by the next two sections): suppose we are given $F_5$ and move each point a little bit, assume the point that is being moved the furthest is moved distance $\varepsilon \leq 1/1000$. Then
\begin{enumerate}
    \item the sum of the distances cannot change more than $\mathcal{O}(\varepsilon^2)$ [Lemma 1]
    \item but some distances change at least by $c \cdot \varepsilon$. [Lemma 2]
\end{enumerate}
This means that we go from having the same distance $1/\sqrt{5}$ twenty times to twenty distances that still sum, up to second order terms, to $20/\sqrt{5}$ with some of them being order $\pm \varepsilon$ away from $1/\sqrt{5}$. At this point, we recall Jensen's inequality and the convexity of $x \rightarrow 1/x^p$ to deduce that
$$ \frac{1}{20}\sum_{i \neq j} \frac{1}{\|x_i - x_j\|^p} \geq \frac{1}{ \left( \frac{1}{20} \sum_{i \neq j} \|x_i - x_j\|  \right)^p }.$$
Lemma 1 implies that 
$$ \frac{1}{20} \sum_{i \neq j} \|x_i - x_j\| = \frac{1}{\sqrt{5}} + \mathcal{O}(\varepsilon^2)$$
which is \textit{almost} what we want: however, it is conceivable that the error term is positive and the lower bound obtained this way would be strictly smaller than what we try to prove (which is a lower bound of $20 \cdot 5^{p/2}$).  At this point we invoke Lemma 2 to argue that the $\|x_i - x_j\|$ are not all too close to $1/\sqrt{5}$ which implies that we expect an order $\mathcal{O}(\varepsilon^2)$ loss if we apply Jensen's inequality. The goal is then to recover that loss; this part of the argument takes places at order $\mathcal{O}(\varepsilon^2)$ with two different factors contributing. Luckily, the implicit constant in front of the favorable term scales like $\sim p^2$ while the other term only scales as $\sim p$ which then implies the result for $p$ sufficiently large.

\subsection{A Sum of Distances Lemma.}

\begin{lemma}
    Let $F_5 = \left\{p_1, \dots, p_5\right\}$ and let $y_1, \dots, y_5 \in \mathbb{T}^2$ be five small points, $\max_{1 \leq i \leq 5} \|y_i\| \leq 1/1000$. There exists a constant $c_1$ so that
    $$ \left| \sum_{i,j=1 \atop i \neq j}^{5} \| (p_i + y_i) - (p_j + y_j) \| - \frac{20}{\sqrt{5}} \right| \leq  c_1 \sum_{i=1}^{5}\|y_i\|^2.$$
\end{lemma}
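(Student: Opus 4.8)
The plan is to compute the first-order variation of each pairwise geodesic distance and observe that all first-order terms cancel by a symmetry of the Fibonacci set, leaving only the claimed $\mathcal{O}(\varepsilon^2)$ error. Write $d_{ij} = \|(p_i+y_i)-(p_j+y_j)\|$. Since the $p_i$ all sit at mutual toroidal distance exactly $1/\sqrt5$, and $1/\sqrt5 < 1/2$, each pair $(p_i,p_j)$ has a unique shortest geodesic on $\mathbb{T}^2$, so for $\|y_i\|,\|y_j\|$ below a fixed threshold (certainly $\le 1/1000$ suffices) the function $(y_i,y_j)\mapsto d_{ij}$ is smooth and given by the Euclidean formula $\|(p_i - p_j) + (y_i - y_j)\|$ for the correct representative $p_i - p_j$ in $\mathbb{R}^2$. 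A second-order Taylor expansion of the Euclidean norm gives, with $v_{ij} := (p_i-p_j)/\|p_i-p_j\|$ the unit vector and $P_{ij}^\perp$ the projection onto its orthogonal complement,
$$ d_{ij} = \frac{1}{\sqrt5} + \langle v_{ij},\, y_i - y_j\rangle + \frac{\sqrt5}{2}\,\|P_{ij}^\perp(y_i-y_j)\|^2 + \mathcal{O}\big(\|y_i-y_j\|^3\big). $$

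Next I sum over all ordered pairs $i\ne j$. The constant terms contribute $20/\sqrt5$. The quadratic and cubic terms are each bounded in absolute value by a constant times $\sum_{i\ne j}\|y_i - y_j\|^2 \le C\sum_i \|y_i\|^2$ (using $\|y_i - y_j\|^2 \le 2\|y_i\|^2 + 2\|y_j\|^2$ and, for the cubic remainder, that $\|y_i - y_j\| \le 2/1000$ is bounded). So the entire content of the lemma is that the linear term vanishes:
$$ \sum_{i \neq j} \langle v_{ij},\, y_i - y_j \rangle = 0. $$
Regrouping by the index being perturbed, this equals $\sum_{i} \big\langle \sum_{j \neq i} (v_{ij} - v_{ji}),\, y_i \big\rangle = 2\sum_i \big\langle \sum_{j\neq i} v_{ij},\, y_i\big\rangle$, so it suffices to show that for each fixed $i$ the sum of unit vectors $\sum_{j \neq i} v_{ij}$ pointing from the other four Fibonacci points toward $p_i$ is zero.

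The key step, then, is this geometric fact about $F_5$: at each point $p_i$, the four unit vectors toward the other points sum to zero. I would verify it by using that the Fibonacci set is a coset of the subgroup generated by $(1/5, 2/5)$ in $(\mathbb{R}/\mathbb{Z})^2$, hence vertex-transitive under translations, so it is enough to check it at $p_1 = (0,0)$; there one lists the four nearest representatives of $p_2,\dots,p_5$ (each at distance $1/\sqrt5$) and checks directly that the four unit vectors come in two antipodal pairs — equivalently that the four neighbors of $p_1$ are symmetric under $x\mapsto -x$. This is the step I expect to carry the real weight (everything else is bookkeeping on Taylor remainders), but it is a finite check: the neighbor set of the origin in the Fibonacci lattice is $\{\pm(\tfrac15,\tfrac25),\ \pm(\tfrac25,-\tfrac15)\}$ up to the unit cell, manifestly antipodally symmetric, so each antipodal pair of unit vectors cancels and $\sum_{j\neq 1} v_{1j} = 0$. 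Combining the vanishing linear term with the quadratic bound yields the stated inequality with an explicit $c_1$.
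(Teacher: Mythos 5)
Your proof is correct and takes essentially the same route as the paper: both reduce the lemma to the vanishing of the first-order term in the expansion of the pairwise distances, which comes down to the fact that at each point of $F_5$ the four unit vectors toward its neighbors sum to zero (your antipodally symmetric neighbor set $\pm(\tfrac15,\tfrac25),\ \pm(\tfrac25,-\tfrac15)$ is exactly what the paper rotates into $(\pm\tfrac{1}{\sqrt5},0),(0,\pm\tfrac{1}{\sqrt5})$ before computing the vanishing gradient). The only differences are cosmetic: you carry out the Taylor-remainder bookkeeping and the regrouping over perturbed points explicitly, where the paper appeals to analyticity and linearity to reduce to a single moving point.
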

\begin{proof}
Since $\| p_i - p_j\| = 1/\sqrt{5}$ and we are only seeing small perturbations, the entire sum is analytic in the $y_i$. The inequality is clearly true when all the perturbations vanish, it is therefore equivalent to the vanishing of the first derivative with respect to $y_1, \dots, y_5$. Phrased differently, there is a power series expansion
$$  \sum_{i,j=1 \atop i \neq j}^{5} \| (p_i + y_i) - (p_j + y_j) \| = \frac{20}{\sqrt{5}} + \sum_{i=1}^{5} (a_i y_{i,1} + b_i y_{i,2}) + \mathcal{O}\left( \sum_{i=1}^{5}\|y_i\|^2\right)$$
and it suffices to show that the linear terms all vanish, i.e. that $a_i = 0 = b_i$. We note that this a statement for linear functions, it therefore suffices to prove it for a single $y_i \neq 0$ assuming that all other perturbations are 0. At this point, it is helpful to visualize the `nearest-neighbor-geometry' of the Fibonacci set which is best achieved by drawing the set and shifts.
\begin{center}
\begin{figure}[h!]
    \includegraphics[width=0.4\textwidth]{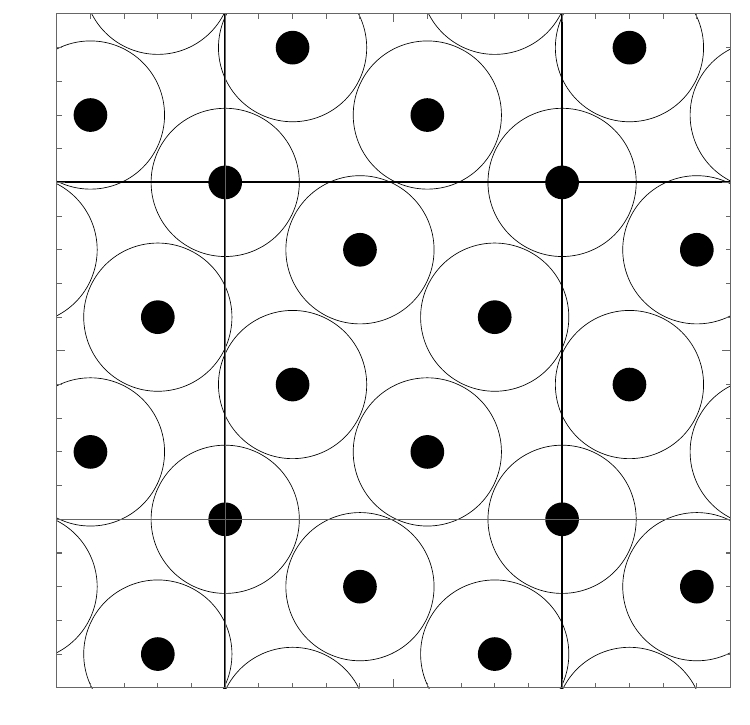}
    \caption{`Unwrapping' the Fibonacci set.}
    \label{fig:unwrap}
\end{figure}    
\end{center}
There is an underlying lattice structure. After rotation, we can assume that the point that we are perturbing lies in $(0,0)$, the four neighboring points are
$$ \left( \tfrac{1}{\sqrt{5}}, 0 \right),  \left(- \tfrac{1}{\sqrt{5}}, 0 \right),  \left( 0, \tfrac{1}{\sqrt{5}} \right) \quad \mbox{and} \quad  \left( 0, -\tfrac{1}{\sqrt{5}} \right).$$
The distance between $(0,0)$ to any of these points is $1/\sqrt{5} \ll 0.5$, which shows that the toroidal structure does not come into play, locally everything behaves in $\mathbb{R}^2$.
We define, for $x,y \in \mathbb{R}$ small,
\begin{align*}
    F(x,y) &= \left\| (x,y) - \left( \tfrac{1}{\sqrt{5}}, 0 \right)\right\| +  \left\| (x,y) - \left( -\tfrac{1}{\sqrt{5}}, 0 \right)\right\| \\
    &+  \left\| (x,y) - \left( 0, \tfrac{1}{\sqrt{5}} \right)\right\| +   \left\|(x,y) - \left(0, -\tfrac{1}{\sqrt{5}} \right)\right\|.
\end{align*} 
It is easy to see, either from the Pythagorean Theorem or an explicit computation, 
$$ \frac{\partial F}{\partial x}\big|_{(x,y) = (0,0)} = 0 \quad \mbox{and} \quad \frac{\partial F}{\partial y}\big|_{(x,y) = (0,0)} = 0.$$ 
This implies that the linear terms vanish and the claimed result follows.  A little bit more could be said: a more extensive computation shows
$$ F(x,y) = \frac{4}{\sqrt{5}} + \sqrt{5} (x^2 + y^2) + \mathcal{O}(|x|^3 + |y|^3).$$
which shows that the implicit constant is, infinitesimally, given by $\sqrt{5}$. If we want validity in a small neighborhood, say $\sqrt{x^2 + y^2} \leq 1/1000$, then the constant has to be increased a little bit (though, presumably, not very much); since this is not relevant for the rest of the argument, we do not further pursue this here (though, it might be relevant if one wanted to try to obtain an explicit bound on $p_0$).
\end{proof}

\subsection{A Linear Shift Lemma}
We will need a second Lemma that says the following: if we perturb all the points $p_i \rightarrow p_i + y_i$ then unless this is a global shift (meaning $y_i = y_j$ for all $i,j$), some of the distances change noticeably (meaning to first order). We note that since we are only interested in sums over differences, we can assume w.l.o.g. that the first point does not move, i.e. $y_1 = 0$.
\begin{lemma}
   There exists a universal constant $c>0$ such that if $F_5 = \left\{p_1, \dots, p_5\right\}$ denotes the Fibonacci points and $y_1, \dots, y_5 \in \mathbb{T}^2$ are five `small' points, such that $\max_{1 \leq i \leq 5} \|y_i\| \leq 1/1000$ and if $y_1 = (0,0)$, then
    $$ \min_{i \neq j} \| (p_i+ y_i) - (p_j + y_j) \| \leq \frac{1}{\sqrt{5}} - c \max_{1 \leq i \leq 5} \|y_i\|.$$
\end{lemma}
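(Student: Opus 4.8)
The plan is to exploit the rigidity of the Fibonacci lattice: the configuration $F_5$ is a perfect triangular lattice quotient in which every point has six nearest neighbors at distance $1/\sqrt{5}$ (the four ``axis'' neighbors identified in Lemma 1 together with two more coming from the lattice structure visible in Figure~\ref{fig:unwrap}). The key structural fact is that the nearest-neighbor graph of $F_5$ is \emph{rigid} in the sense that the only infinitesimal motions preserving all edge lengths to first order are the global translations. Concretely, for each ordered pair $(i,j)$ realizing a minimal distance, write $u_{ij} = (p_i - p_j)/\|p_i - p_j\|$ for the corresponding unit vector; the first-order change in $\|(p_i+y_i)-(p_j+y_j)\|$ is $\langle u_{ij}, y_i - y_j\rangle$. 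What I would prove is the following linear-algebra statement: there is a constant $\kappa > 0$ such that for all $y_1, \dots, y_5 \in \mathbb{R}^2$,
$$ \max_{(i,j) \ \mathrm{nn}} \ \big| \langle u_{ij}, y_i - y_j \rangle \big| \ \geq \ \kappa \max_{i} \ \min_{v \in \mathbb{R}^2} \|y_i - v\|. $$
In other words, unless the $y_i$ are all equal to a common vector $v$, some nearest-neighbor edge length changes linearly in the displacement. Granting $y_1 = (0,0)$, the right-hand side is comparable to $\max_i \|y_i\|$ (the optimal $v$ is $O(\max_i\|y_i\|)$), so some nearest-neighbor distance either increases or decreases by at least $c\max_i\|y_i\|$ to first order; taking the minimum over all pairs, some distance \emph{decreases} by at least (roughly) $c\max_i\|y_i\|$, since distances summing to $20/\sqrt5$ up to $O(\varepsilon^2)$ by Lemma~1 cannot all go up by order $\varepsilon$. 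Thus $\min_{i\neq j}\|(p_i+y_i)-(p_j+y_j)\| \leq 1/\sqrt5 - c\max_i\|y_i\| + O(\varepsilon^2)$, and absorbing the quadratic error into the linear term (shrinking $c$, using $\varepsilon \leq 1/1000$) gives the claim.

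The proof of the displayed linear inequality is where I expect to do real work, but it is finite-dimensional and explicit. First I would enumerate the nearest-neighbor edges of $F_5$ and the associated unit vectors $u_{ij}$; by the lattice picture these point in three directions (the three edge directions of the triangular lattice), and crucially these three directions \emph{positively span} $\mathbb{R}^2$ — no open half-plane contains all of them. Consequently the linear map $L: (y_1,\dots,y_5) \mapsto \big(\langle u_{ij}, y_i - y_j\rangle\big)_{(i,j)\ \mathrm{nn}}$ has kernel exactly the diagonal $\{(v,v,v,v,v)\}$: if $Ly = 0$ then following any cycle of edges forces consecutive differences $y_i - y_j$ to be orthogonal to all three edge directions hence zero, and connectedness of the nearest-neighbor graph propagates this. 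Then $\kappa$ is simply the smallest singular value of $L$ restricted to the orthogonal complement of the diagonal — a positive constant since that restriction is injective. This is a compactness/nondegeneracy argument on a fixed finite-dimensional space; one could also extract an explicit value by direct computation, but for the qualitative statement the nondegeneracy suffices.

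The main obstacle is verifying the positive-spanning property of the nearest-neighbor edge directions of $F_5$ carefully — i.e., correctly identifying \emph{all} the minimal-distance pairs (there are more than the four axis neighbors used in Lemma~1; each point has degree six in the nearest-neighbor graph of the Fibonacci configuration), checking that the resulting edge directions are the three directions of a triangular lattice, and confirming that these three directions do not lie in a half-plane. Once that combinatorial-geometric picture is pinned down, the rest is routine linear algebra plus the first-order expansion $\|(p_i+y_i)-(p_j+y_j)\| = \|p_i-p_j\| + \langle u_{ij}, y_i-y_j\rangle + O(\varepsilon^2)$, which is legitimate because all relevant distances stay well away from $0$ and from $1/2$, so the torus geometry is locally Euclidean exactly as in Lemma~1. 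A secondary point to handle cleanly: one must argue that the linearly-changing distance can be taken to be one that \emph{decreases}; this follows because if every nearest-neighbor distance increased by order $\varepsilon$ then the sum of those (at least) fifteen distances would exceed $15/\sqrt5$ by order $\varepsilon$, contradicting the $O(\varepsilon^2)$ bound of Lemma~1 on the full distance sum (the non-nearest distances are bounded and smooth, contributing only $O(\varepsilon)$ with controlled sign, which a slightly more careful bookkeeping — or simply choosing $\varepsilon$ small enough that the $\kappa\varepsilon$ linear gain dominates — resolves).
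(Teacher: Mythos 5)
Your overall strategy --- linearize the distance constraints, show that the rigidity matrix of the nearest-neighbor framework has kernel exactly the global translations, take the smallest singular value on the complement of the diagonal, and then use Lemma~1 to convert ``some distance changes at first order'' into ``some distance decreases'' --- is viable and genuinely different from the paper's argument. However, your description of the geometry of $F_5$ is factually wrong, and the sketched proof of the key kernel claim fails as written. The Fibonacci points are a quotient of a \emph{square} lattice, not a triangular one: the difference vectors of $F_5$ generate the lattice spanned by $(1/5,2/5)$ and $(2/5,-1/5)$, two \emph{orthogonal} vectors of length $1/\sqrt{5}$. Consequently all ten pairwise toroidal distances are equal to $1/\sqrt{5}$ (this is exactly why the extremal energy is $20\cdot 5^{p/2}$), each point has the other four points as its nearest neighbors, and the ten unit edge directions take only the two values $\pm u$, $\pm v$ with $u\perp v$; there is no vertex of degree six and there are no three triangular directions. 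Moreover, the deduction you propose for the kernel (``consecutive differences $y_i-y_j$ are orthogonal to all three edge directions, hence zero'') is invalid even granting your picture: each pair $(i,j)$ is realized by a single minimal vector, so $Ly=0$ only forces $y_i-y_j\perp u_{ij}$ for that one direction, and you cannot conclude that any individual difference vanishes.

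The repair is short, because the rigidity statement you need is true for the actual geometry: writing $y_i=a_iu+b_iv$, the edges with direction $u$ form the $5$-cycle $1\to2\to3\to4\to5\to1$ and force $a_1=\dots=a_5$, while the edges with direction $v$ form the $5$-cycle $1\to3\to5\to2\to4\to1$ and force $b_1=\dots=b_5$; hence $Ly=0$ exactly when $y$ is a global translation (it is the wrapping of these cycles around the torus that also kills the rotational flex one would have in the plane). With that corrected, your singular-value bound, the sign bookkeeping via Lemma~1 (the paper makes the same remark: since the sum of distances is constant up to $O(\varepsilon^2)$, a first-order change in one distance forces a first-order \emph{decrease} in some distance), and the absorption of the quadratic error for $\varepsilon\le 1/1000$ all go through. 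For comparison, the paper's own proof avoids the linear-algebra formulation and argues by contradiction and propagation: if no distance changes by more than $\eta\varepsilon$, the maximally moved neighbor $p_2$ of the fixed point $p_1$ must move essentially orthogonally to $p_2-p_1$, which changes $\|p_2-p_5\|$ by at least $\varepsilon/2$, forcing $p_5$ to move with component at least $\varepsilon/8$ in that direction, which in turn changes $\|p_1-p_5\|$ by at least $\varepsilon/20$ --- a contradiction. Your route, once the combinatorial-geometric picture is corrected, is more systematic and gives the constant as an explicit smallest singular value; the paper's is more elementary and self-contained.
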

 Using Lemma 1, it actually suffices to show that at least one distance changes by at least $c \max_{1 \leq i \leq 5} \|y_i\|$ (either increasing or decreasing) because the sum is constant (up to a second order term). In particular, Lemma 2 (in conjunction with Lemma 1) also implies (and is basically equivalent to)
     $$ \max_{i \neq j} \| (p_i+ y_i) - (p_j + y_j) \| \geq \frac{1}{\sqrt{5}} + c \max_{1 \leq i \leq 5} \|y_i\|.$$
\begin{proof} Let us assume the statement is false: then, for any small $\eta > 0$, we can find a perturbation with the largest shift being of size $\varepsilon$ such that
$$ \forall k\neq \ell \qquad  \frac{1}{\sqrt{5}} - \eta \max_{1 \leq i \leq 5} \|y_i\| \leq \| (p_k+ y_k) - (p_{\ell} + y_{\ell}) \| \leq \frac{1}{\sqrt{5}} + \eta \max_{1 \leq i \leq 5} \|y_i\|.$$
\begin{center}
    \begin{figure}[h!]
    \begin{tikzpicture}[scale=2]
        \filldraw (0,0) circle (0.05cm);
        \node at (0.15, -0.15) {$p_1$};
        \filldraw (0.3,0.6) circle (0.05cm);
         \node at (0.3+0.12, 0.6-0.22) {$p_2$};
      \filldraw (-0.3,-0.6) circle (0.05cm);
        \node at (-0.3+0.15, -0.6-0.15) {$p_3$};
              \filldraw (0.6,-0.3) circle (0.05cm);
 \node at (0.6+0.15, -0.3-0.15) {$p_4$};
               \filldraw (-0.6,0.3) circle (0.05cm);
 \node at (-0.6+0.15, 0.3-0.15) {$p_5$};
                \draw [thick] (-0.6+1.5,0.3) circle (0.05cm);
 \node at (1.5-0.6+0.15, 0.3-0.15) {$p_5$};
 \draw (0,0) circle (0.68cm);
  \draw [thick] (0.3,0.6) circle (0.1cm);
  \draw [dashed] (1.5-0.6, 0.3) -- (0.3, 0.6);
           \node at (0.3+0.12-1.5, 0.6-0.22) {$p_2$};
      \draw [thick] (0.3-1.5,0.6) circle (0.05cm);
        \draw [dashed] (-0.6, 0.3) -- (0.3-1.5, 0.6);
    \end{tikzpicture}
    \caption{$p_1$ does not move; its nearest neighbors are shown. $p_2$ is being moved to one of the points on the smaller circle.}
    \label{fig:fib}
    \end{figure}
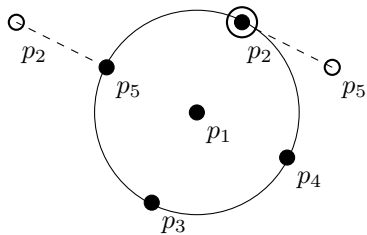
\end{center}

We now show that this will lead to a contradiction for $\eta$ sufficiently small.
 $p_1$ does not move, we think of it as in the center, surrounded by its four neighbors (see Fig. \ref{fig:fib}). At least one of the four neighbors of $p_1$ moves a distance of
$$\varepsilon := \max_{1 \leq i \leq 5} \|y_i\| \leq \frac{1}{1000}.$$
By symmetry, we can assume that the point that moves the most is $p_2$, i.e. $\|y_2\| = \varepsilon \geq \|y_i\|$. At this point we use that $p_2$ does not change its distance to $p_1$ by more than $\eta \varepsilon$ (with $0 < \eta \ll 1$ very small). This forces $y_2$ to be very nearly orthogonal to $p_2 - p_1$ (see Fig. \ref{fig:illu}).

\begin{center}
    \begin{figure}[h!]
    \begin{tikzpicture}[scale=1]
        \filldraw (0,0) circle (0.06cm);
        \node at (0, -0.3) {$p_1$};
                \filldraw (6,0) circle (0.06cm);
         \node at (6, -0.3) {$p_2$};
         \draw (6, 0) circle (1cm);
         \draw [<->] (7.5, 0) -- (7.5, 1);
         \node at (7.7, 0.5) {$\varepsilon$};
          \draw [thick,domain=0:15] plot ({6.2*cos(\x)}, {6.2*sin(\x)});
        \draw [thick,domain=345:360] plot ({6.2*cos(\x)}, {6.2*sin(\x)});
          \draw [thick,domain=0:15] plot ({5.8*cos(\x)}, {5.8*sin(\x)});
        \draw [thick,domain=345:360] plot ({5.8*cos(\x)}, {5.8*sin(\x)});     
        \draw [ultra thick,domain=84:105] plot ({6+cos(\x)}, {sin(\x)}); 
     \draw [ultra thick,domain=84:105] plot ({6+1.02*cos(\x)}, {1.02*sin(\x)});   
      \draw [ultra thick,domain=84:105] plot ({6+cos(\x)}, {-sin(\x)}); 
     \draw [ultra thick,domain=84:105] plot ({6+1.02*cos(\x)}, {-1.02*sin(\x)}); 
        \draw [<->] (5.6, -1.7) -- (6, -1.7);
        \node at (5.8, -2) {$\eta \varepsilon$};
    \end{tikzpicture}
    \caption{$p_2$ moves to $p_2 + y_2$ which is a point on the circle. Restrictions imposed by $\|p_1 - (p_2 + y_2)\|$ not changing more than $\eta \varepsilon$ forces the new point to be in the bold part.}
    \label{fig:illu}
    \end{figure}
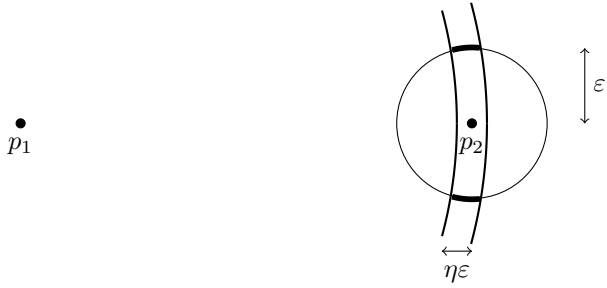
\end{center}

At this point, the global structure comes into play. We see that this implies that $p_2$ moving to $p_2 + y_2$ changes the distance it has to $p_5$ by at least a factor of $\varepsilon/2$
$$ \left| \| (p_2 + y_2) - p_5 \| - \| p_2 - p_5\| \right| \geq \frac{\varepsilon}{2}.$$
This, by itself, is not a contradiction yet. However, for $0 < \eta \ll 1$ very small, it forces $p_5$ to move into a direction $p_5 + y_5$ with 
$$ \left| \left\langle y_5 , \frac{y_2}{\|y_2\|} \right\rangle \right| \geq \frac{\varepsilon}{8}$$
since otherwise there is not enough length to compensate.  Then however, $p_5$ changes its distance to $p_1$ by at least $\varepsilon/20$ and we have a contradiction. 
\end{proof}

\subsection{Proof of Theorem \ref{prop:5}}
\begin{proof} We assume now that the 
$ F_5 = \left\{p_1, p_2, p_3, p_4, p_5 \right\} \subset \mathbb{T}^2$
is the Fibonacci set and that we are given a small perturbation $p_i \rightarrow p_i + y_i$ where  $$\varepsilon := \max_{1 \leq i \leq 5} \|y_i\| \leq 1/1000.$$
Since the entire problem only concerns the difference between points, it is invariant under global shifts of the entire set and we may assume without loss of generality that $y_1 = (0,0)$, i.e. the point $p_1$ does not move.
We introduce ten real variables $\delta_{ij}$ for $1 \leq i < j \leq 5$ via
$$  \| (p_i + y_i) - (p_j + y_j) \| = \frac{1}{\sqrt{5}} + \delta_{ij}.$$
Lemma 1 implies that for some constant $c_1 > 0$
$$ \left| \sum_{i, j = 1}^{5} \delta_{ij} \right| \leq c_1 \varepsilon^2$$
while Lemma 2 implies that for some constant $c_2 > 0$
$$ \min_{i,j} \delta_{ij} \leq - c_2 \varepsilon.$$
The goal is to show that
$$ \sum_{i, j =1 \atop i <j}^{5} \frac{1}{ \left( \frac{1}{\sqrt{5}} + \delta_{ij} \right)^p} \geq 10 \cdot 5^{p/2}$$
with equality if and only if all the $\delta_{ij} \equiv 0$. Without loss of generality and after possibly relabeling the points, we may assume that the smallest among the $\delta_{ij}$ is $\delta_{12}$. We use Jensen's inequality: for any $k$ positive real numbers $x_1, \dots, x_k$, the convexity of $x \rightarrow 1/x^p$ implies that
$$ \frac{1}{k}\sum_{\ell = 1}^{k} \frac{1}{x_{\ell}^p} \geq \left( \frac{1}{k} \sum_{\ell=1}^{k} x_{\ell} \right)^{-p} \qquad \mbox{or, equivalently} \qquad \sum_{\ell = 1}^{k} \frac{1}{x_{\ell}^p} \geq k\left( \frac{1}{k} \sum_{\ell=1}^{k} x_{\ell} \right)^{-p}.$$
We rewrite the sum by explicitly keeping the term involving $\delta_{12}$ and applying Jensen's inequality to the remaining $k=9$ terms. This leads to
\begin{align*}
    \sum_{i, j =1 \atop i <j}^{5} \frac{1}{ \left( \frac{1}{\sqrt{5}} + \delta_{ij} \right)^p} &\geq  \frac{1}{ \left( \frac{1}{\sqrt{5}} + \delta_{12} \right)^p} +  9 \left(\frac{1}{9}  \sum_{i, j =1 \atop i <j, (i,j) \neq (1,2)}^{5}  \left[ \frac{1}{\sqrt{5}} + \delta_{ij} \right] \right)^{-p} \\
    &= \frac{1}{ \left( \frac{1}{\sqrt{5}} + \delta_{12} \right)^p} +  9 \left(  \frac{1}{\sqrt{5}} + \frac{1}{9}  \sum_{i, j =1 \atop i <j, (i,j) \neq (1,2)}^{5}  \delta_{ij}  \right)^{-p}.
\end{align*}
At this point we use Lemma 1 to deduce that the sum over all changes $\delta_{ij}$ is small and therefore 
\begin{align*}
\frac{1}{9} \sum_{i, j =1 \atop i <j, (i,j) \neq (1,2)}^{5}  \delta_{ij} = -\frac{\delta_{12}}{9} + \frac{1}{9} \sum_{i, j =1 \atop i <j}^{5}  \delta_{ij} \leq  -\frac{\delta_{12}}{9} +  c_1 \varepsilon^2.
\end{align*}
This implies that 
\begin{align*}
    \sum_{i, j =1 \atop i <j}^{5} \frac{1}{ \left( \frac{1}{\sqrt{5}} + \delta_{ij} \right)^p} 
    &\geq  
     \frac{1}{ \left( \frac{1}{\sqrt{5}} + \delta_{12} \right)^p} +  9 \left(  \frac{1}{\sqrt{5}}   -\frac{\delta_{12}}{9} + c_1 \varepsilon^2 \right)^{-p}.
\end{align*}
Invoking Lemma 2 implies that 
$$\delta_{12} \leq - c_2 \varepsilon \implies \varepsilon^2 \leq \frac{\delta_{12}^2}{c_2^2}$$
and therefore
\begin{align*}
    \sum_{i, j =1 \atop i <j}^{5} \frac{1}{ \left( \frac{1}{\sqrt{5}} + \delta_{ij} \right)^p} 
    &\geq  
     \frac{1}{ \left( \frac{1}{\sqrt{5}} + \delta_{12} \right)^p} +  9 \left(  \frac{1}{\sqrt{5}}   -\frac{\delta_{12}}{9} + \frac{c_1}{c_2^2} \delta_{12}^2 \right)^{-p}.
\end{align*}
We abbreviate this new universal constant as $c= c_1/c_2^2 > 0$ and are led to the study of the function $h:[0,1/\sqrt{5}] \rightarrow \mathbb{R}$
$$ h(x) = \frac{1}{ \left( \frac{1}{\sqrt{5}} - x \right)^p} +  \frac{9}{\left(  \frac{1}{\sqrt{5}}   + \frac{x}{9} + c x^2 \right)^{p}}.$$
Of course $h(0) = 10 \cdot 5^{p/2}.$
It remains to show that for all $c>0$ there exists $p_c > 0$, depending only on $c$, such that for $p \geq p_c$, the function $h$ assumes its global minimum in 0. A short computation shows that $h'(0) = 0$
and
$$ h''(0) = \frac{2p}{9} 5^{p/2} \left( 25(1+p) - 81 \sqrt{5} c \right)$$
which is positive for $p$ sufficiently large ($p \geq 10c$ is enough). If we can now show that $h'(x)$ cannot have a positive root, monotonicity would follow: if $h'(0) = 0$ and $h''(0) > 0$, then $h'(x)$ is positive for all small enough positive $x$ and if the first derivative $h'$ cannot vanish, then it has to say positive, the function $h$ is monotonically increasing and its minimum is attained in 0. We have
\begin{align*}
 \frac{h'(x)}{p} &= \frac{1}{(1/\sqrt{5} - x)^{p+1}} - \frac{ 1 + 18 c x}{(1/\sqrt{5} + x/9 + c x^2)^{p+1}} \\
 &= \left[\frac{1}{(1/\sqrt{5} - x)^{p+1}} - \frac{1}{(1/\sqrt{5})^{p+1}}\right] + \left[\frac{1}{(1/\sqrt{5})^{p+1}}  - 
 \frac{ 1 + 18 c x}{(1/\sqrt{5} + x/9 + c x^2)^{p+1}} \right] \\
 &\geq  \left[\frac{1}{(1/\sqrt{5} - x)^{p+1}} - \frac{1}{(1/\sqrt{5})^{p+1}}\right] + \left[\frac{1}{(1/\sqrt{5})^{p+1}}  - 
 \frac{ 1 + 18 c x}{(1/\sqrt{5})^{p+1}} \right] \\
 &= \left[\frac{1}{(1/\sqrt{5} - x)^{p+1}} - \frac{1}{(1/\sqrt{5})^{p+1}}\right] - \frac{18 c x}{(1/\sqrt{5})^{p+1}}.
\end{align*}
At this point, we use the mean value theorem in the following form: for any differentiable $f:[a,b] \rightarrow \mathbb{R}$, we have
$$ f(b) - f(a) \geq \left( \min_{a \leq x \leq b} f'(x)\right) (b-a)$$
which we use to deduce that
 $$ \frac{1}{(1/\sqrt{5} - x)^{p+1}} - \frac{1}{(1/\sqrt{5})^{p+1}} \geq \frac{p+1}{(1/\sqrt{5})^{p+2}} x \geq \frac{p x}{(1/\sqrt{5})^{p+1}}  .$$
This dominates the other term for all $p > 18c$ and the result follows.
\end{proof}

\end{document}